\documentclass[a4paper,10pt]{amsart}
\usepackage[utf8]{inputenc}
\usepackage{amsmath, amssymb,bm}
\usepackage[all]{xy}
\title{Albert Algebras over Rings and Related Torsors}
\author[S.\ Alsaody]{Seidon Alsaody}
\address{Department of Mathematical and Statistical Sciences, University of Alberta, Edmonton, AB T6G 2G1, Canada.}
\thanks{An extensive part of this work was done while the author was a postdoctoral fellow at Institut Camille Jordan (Lyon, France),
supported by the grant KAW 2015.0367 from the Knut and Alice Wallenberg Foundation.}

\newtheorem{Thm}{Theorem}[section]
\newtheorem{Prp}[Thm]{Proposition}
\newtheorem{Cor}[Thm]{Corollary}
\newtheorem{Lma}[Thm]{Lemma} 
\theoremstyle{definition}
\newtheorem{Def}[Thm]{Definition}
\newtheorem{Rk}[Thm]{Remark}
\newtheorem{Ex}[Thm]{Example}

\numberwithin{equation}{section}

\newcommand{\Hom}{\operatorname{Hom}}

\newcommand{\Ker}{\mathrm{Ker}}
\newcommand{\End}{\mathrm{End}}
\newcommand{\Aut}{\mathbf{Aut}}

\newcommand{\Z}{\mathbb{Z}}
\newcommand{\Gm}{\mathbb{G}_{\mathrm{m}}}

\newcommand{\Zor}{\mathrm{Zor}}

\newcommand{\Id}{\mathrm{Id}}

\newcommand{\Lie}{\mathrm{Lie}}

\newcommand{\bAut}{\mathbf{Aut}}

\newcommand{\bGL}{\mathbf{GL}}
\newcommand{\RT}{\mathbf{RT}}
\newcommand{\Spin}{\mathbf{Spin}}
\newcommand{\Spec}{\mathrm{Spec}}

\newcommand{\bH}{\mathbf{H}}
\newcommand{\bG}{\mathbf{G}}
\newcommand{\bF}{\mathbf{F}}

\newcommand{\bS}{\mathbf{S}}

\newcommand{\bE}{\mathbf{E}}
\newcommand{\bW}{\mathbf{W}}

\newcommand{\Isom}{\mathbf{Isom}}
\newcommand{\bStr}{\mathbf{Str}}
\newcommand{\bc}{\mathbf{c}}
\newcommand{\be}{\mathbf{e}}
\newcommand{\calM}{\mathcal{M}}
\newcommand{\barC}{\overline{C}}
\newcommand{\bone}{\mathbf{1}}

\entrymodifiers={+!!<0pt,\fontdimen22\textfont2>}
\begin{document}

\begin{abstract} We study exceptional Jordan algebras and related exceptional group schemes over commutative rings from a geometric point of view, using appropriate torsors to parametrize and explain
classical and new constructions, and proving that over rings, they give rise to non-isomorphic structures.

We begin by showing that isotopes
of Albert algebras are obtained as twists by a certain $\mathrm F_4$-torsor with total space a group of type $\mathrm E_6$, and using this, that Albert algebras over rings in general
admit non-isomorphic isotopes, even in the split case as opposed to the situation over fields. We then consider certain $\mathrm D_4$-torsors constructed from reduced Albert algebras,
and show how these give rise to a class of generalised reduced Albert algebras constructed from compositions of quadratic forms. Showing that this torsor is non-trivial, we conclude
that the Albert algebra does not uniquely determine the underlying composition, even in the split case. In a similar vein, we show that a given reduced Albert algebra can admit two coordinate
algebras which are non-isomorphic and have non-isometric quadratic forms, contrary, in a strong sense, to the case over fields, established by Albert and Jacobson.
\end{abstract}
\maketitle
\bigskip

\noindent{\bf Keywords:} Albert algebras, Jordan algebras, homogeneous spaces, torsors, isotopes, composition, octonions. 

\medskip

\noindent{\bf MSC: 17C40, 20G41, 14L30, 20G10}.

\date{\today}

\section{Introduction}
Albert algebras are exceptional Jordan algebras. Over fields, any simple Jordan algebra is either \emph{special}, i.e.\ a subalgebra of the anticommutator algebra $B^+$ 
of an associative algebra $B$, or an Albert algebra \cite{MZ}. Albert algebras over rings have attracted recent interest, as is seen in the extensive survey \cite{P}, which contains a historic account to which we refer the interested reader. 
Over an algebraically closed field, there is a unique Albert algebra up to isomorphism; this both 
testifies to the remarkable character of these algebras, and indicates that the framework of torsors, descent and cohomology is suitable for studying them over a general base. 
Furthermore, the assignment $A\mapsto \Aut(A)$ defines a correspondence between Albert algebras and groups of type $\mathrm F_4$. Thus understanding
Albert algebras provides a precise understanding of such groups, and vice versa. This is analogous to the relationship between octonion algebras and groups of type $\mathrm G_2$. 
In the present work, we  
approach Albert algebras using appropriate torsors (principal homogeneous spaces), building on this interplay between groups and algebras as well as the relation that Albert
algebras have to octonion algebras, triality for groups of type $\mathrm D_4$, and cubic norms. Thanks to this approach, we are able to shed new light on classical constructions, generalise them, and, expressing the objects as twists by appropriate torsors, we are able to obtain results that the classical methods
have fallen short of providing.

We begin by giving a sketch of the key ideas of the present paper,
with emphasis on the torsors
involved. In this introduction, we abuse notation by referring to groups simply by their type; this is done for the sake of overview and does not imply uniqueness of groups of a certain type.
Precise notation will be used in the respective sections.

An indication that Albert algebras over general unital, commutative rings can be expected to behave much less orderly than they do over fields is that this is true for octonion algebras. 
Indeed, Gille showed in \cite{G1}, by means of $\mathrm G_2$-torsors, that octonion
algebras over rings are not determined by their norm forms. In \cite{AG} the author, together with Gille, showed, using triality, how twisting an octonion algebra $C$ by 
such a torsor precisely gives the classical construction
of isotopes of $C$. Abusing notation, these papers dealt with the torsor\footnote{In this notation for torsors, the arrow denotes the structure map from the total space to the base, and
the group labelling the arrow is the structure group.}
\[\xymatrix@R-10pt{\mathrm D_4\ar[d]^{\mathrm G_2}\\ \mathbf{S}_C^2}\]
where $\mathrm D_4$ and $\mathrm G_2$ are (simply connected) groups of the corresponding types, and $\mathbf{S}_C$ is the octonionic unit sphere in $C$, which is an affine scheme.
Inspired by this, we set out, in the current work, to examine Albert algebras from this point of view.

We start in Section \ref{S2} with the question of isotopy, where we show that the isotopes of an Albert algebra $A$ are precisely the twists of $A$ by the torsor defined by the projection
of the isometry group of the cubic norm $N$ of $A$, which is simply connected of type $\mathrm E_6$, to the unit sphere of $N$. This is a torsor under the automorphism group of $A$; thus it is of the type
\[\xymatrix@R-10pt{\mathrm E_6\ar[d]^{\mathrm F_4}\\ \mathbf{S}_N}\]
with $\mathbf S_N$ an affine scheme. The non-triviality of this torsor for certain Albert algebras is therefore known, since it is
known that over e.g.\ $\mathbb R$ the Albert algebra $H_3(\mathbb O)$ of hermitian $3\times 3$-matrices over the real division octonion algebra $\mathbb O$ 
has non-isomorphic isotopes. However, over any field, isotopes of the split Albert algebra are isomorphic. We prove that this is not the case over rings in general.

In Section \ref{S3} we turn to the inclusion of groups of type $\mathrm D_4$ into the automorphism group of a \emph{reduced} Albert algebra, i.e.\ one of the form $H_3(C,\Gamma)$ for
an octonion algebra $C$ and a triple $\Gamma$ of invertible scalars. (This is a slight deformation of the algebra $H_3(C,\bone)=H_3(C)$ of hermitian matrices.) In \cite{ALM} it was shown that over fields,
this inclusion defines a $\mathrm D_4$-torsor where the base is the scheme of frames of idempotents of the Albert algebra, and this result easily generalises to rings. We show that over rings this torsor is non-trivial in general, even under the condition
that the Albert algebra be split. This torsor is of the type
\[\xymatrix@R-10pt{\mathrm F_4\ar[d]^{\mathrm D_4}\\ \mathbf{F}_A}\]
with $\mathbf F_A$ an affine scheme. In a sense, therefore, this torsor bridges the gap between the two torsors above. The natural question that arises then is what objects it parametrizes.
To answer it, we observe that the definition of a reduced Albert algebra becomes more clean if the symmetric algebras of para-octonions are used instead of octonion algebras. This is in
line with a recent philosophy followed in \cite{KMRT} and \cite{AG}. From there, we are able to generalise the construction of reduced Albert algebras by noting that one may replace the para-octonion algebra by
any \emph{composition of quadratic forms} of rank 8. Such objects generalise (para)-octonion algebras, since they consist of a triple $(C_1,C_2,C_3)$ of quadratic spaces, each of constant
rank 8 as a locally free module, with a map 
(the composition) $C_3\times C_2\to C_1$ that is compatible with the quadratic forms. If $C_1=C_2=C_3=C$, where $C$ is an octonion algebra, then the multiplication of $C$ is a composition
of quadratic forms, and every composition of rank 8 is locally isomorphic to a compositions arising from an octonion algebra in this way. We show that the $\mathrm D_4$-torsor in question classifies
those compositions of quadratic spaces that give rise to isomorphic Albert algebras via this generalised construction. Non-triviality of the torsor means that two non-isomorphic compositions may give 
rise to isomorphic Albert algebras. More precisely we show that even the algebra $H_3(C)$ can arise from compositions not isomorphic to the one obtained from $C$, even in the split case.

This gives rise to the yet more precise question of the extent to which a reduced Albert algebra $A=H_3(C)$ determines $C$, which we treat in Section \ref{S4}. Over a field $k$, the isomorphism class of $A$ (in fact even the 
isotopy class) determines $C$ up to isomorphism; this goes back to the 1957 paper \cite{AJ} of Albert and Jacobson. In cohomological terms, the map
\[H^1(k,\Aut(C))\to H^1(k,\Aut(A)),\]
induced by the natural inclusion $\Aut(C)\to\Aut(A)$, has trivial kernel. Since this inclusion factors through the simply connected group $\Spin(q_C)$ of the quadratic form $q_C$ of $C$,
we first show how the aforementioned result of \cite{G1}, augmented by results from \cite{AG}, readily implies that in general there exists an octonion algebra $C'\not\simeq C$ with $H_3(C')\simeq H_3(C)$,
namely by taking $C'$ having the same quadratic form as $C$. This corresponds to the torsor
\[\xymatrix@R-10pt{\mathrm F_4\ar[d]^{\mathrm G_2}\\ \mathrm F_4/\mathrm G_2}\]
being non-trivial. One may therefore ask if this is the only obstruction, i.e.\ if $H_3(C)$ at least determines the quadratic form of $C$ up to isometry. The main result of Section \ref{S4} is that this question
has a negative answer, which we show by proving that the torsor
\[\xymatrix@R-10pt{\mathrm F_4\overset{\mathrm G_2}{\wedge}\mathrm D_4\ar[d]^{\mathrm D_4}\\ \mathrm F_4/\mathrm G_2}\]
is non-trivial, where $\wedge$ denotes the contracted product.

A conclusion of our work is thus that the behaviour of Albert algebras over rings is significantly more intricate than it is over fields, even when the algebras are reduced or split, and that
these intricacies can be better understood using appropriate torsors. The combination of these two conclusions hopefully serves as an indication for future investigations.

\subsection{Preliminaries on Albert Algebras}
Throughout, $R$ is a unital, commutative ring. By an $R$-ring we mean a unital, commutative and associative $R$-algebra, and by an $R$-field we mean an $R$-ring that is a field. If $M$
is an $R$-module and $S$ is an $R$-ring, we write $M_S$ for the $S$-module $M\otimes_R S$ obtained by base change. All sheaves and cohomology sets involved will be with respect to the fppf topology, and we therefore omit the subscript fppf universally.

A \emph{Jordan algebra} over $R$ is an $R$-module $A$ endowed with a distinguished element $1$ and a quadratic map $U:A\to \End_R(A), x\mapsto U_x$, satisfying the identities
\[\begin{array}{llll}
U_1=\Id_A, & U_{U_xy}=U_xU_yU_x, & \text{and} & U_x\{yxz\}=\{xy(U_xz)\} 
  \end{array}
\]
for all $x,y,z\in A_S$ whenever $S$ is an $R$-ring, where $\{abc\}=(U_{a+c}-U_a-U_c)b$ is the \emph{triple product}. The map $U$ is called the $U$-operator of $A$ and replaces, in a sense,
the multiplication in linear Jordan algebras, and the element $1$ is the \emph{unity} of $A$.

\begin{Ex} The algebras we will consider will be \emph{cubic Jordan algebras}. Recall from \cite{P} that a \emph{cubic norm structure} over $R$ is an $R$-module $A$ endowed 
with a base point $1_A$, a cubic form $N=N_A:A\to R$, known as the \emph{norm},
and a quadratic map $A\to A$ denoted as $x\mapsto x^\sharp$ and known as the \emph{adjoint}. These are required to satisfy certain regularity and compatibility properties (see \cite[5.2]{P}) and give rise to the bilinear trace 
$T=T_A:A\times A\to R$, and the bilinear cross product $A\times A\to A$ given by $(x,y)\mapsto x\times y:=(x+y)^\sharp-x^\sharp-y^\sharp$. The cubic Jordan algebra associated to this
cubic norm structure is the module $A$ with unity $1=1_A$ and $U$-operator given by
\begin{equation}\label{U}U_xy=T(x,y)x-x^\sharp\times y.\end{equation}
\end{Ex}
An element $p\in A$ is invertible precisely when $N(p)\in R^*$, in which case $p^{-1}=N(p)^{-1}p^\sharp$. 

We refer to \cite{P} for a thorough discussion of cubic Jordan algebras. An \emph{Albert algebra} over $R$ is a cubic Jordan $R$-algebra $A$ the underlying module of which is projective of constant rank 27, and such that the Jordan
algebra $A\otimes_R k$ is simple for every $R$-field $k$. The notion of an Albert algebra is stable under base change.

\subsection{Preliminaries on Torsors}
We recall and slightly extend some known facts about torsors that will be useful later. Let $E$ be a $G$-torsor over $X$, where $E$ and $X$ are $R$-schemes, $X$ is affine, and 
$G$ is an $R$-group scheme. Recall that the topology in
question is always the fppf topology. In particular, we have a map $\Pi:E\to X$ and a compatible action of $G$ on $E$ (to fix a convention, we assume $G$ acts on the \emph{right}.)
Recall that $E$ is trivial if there is a $G$-equivariant morphism $E\to X\times G$ over $X$, which is equivalent to the projection $\Pi$ admitting a section \cite[III.4.1.5]{DG}.

\begin{Rk}\label{Rtrivial} If $E$ is trivial, then clearly $\Pi_S:E(S)\to X(S)$ is surjective for every $R$-ring $S$. We will need the converse, which holds as well: if $E$ is
non-trivial, then there is an $R$-ring $A$ such that $\Pi_A$ is not surjective. Indeed, if $X=\Spec(A)$ for an $R$-ring $A$, then $X(A)=\Hom(A,A)$ contains the generic element $\Id_A$. 
If $\Id_A=\Pi(f)$ for some $f\in E(A)$, then the morphism $X=\Spec(A)\to E$ corresponding to $f$ is a section. 
\end{Rk}

A certain class of torsors will be of particular interest to us.

\begin{Rk}\label{Rquotient} If $G$ is an $R$-group scheme, and $H$ is a subgroup scheme of $G$, 
then the inclusion $i:H\to G$ induces a map $i^*:H^1(R,H)\to H^1(R,G)$; explicitly, by \cite[III.4.4.1]{DG}, $i^*$ maps the class of an $H$-torsor $E$ to the class of the $G$-torsor
$E\wedge^H G$, where $\wedge^H$ denotes the contracted product over $H$. This is the quotient sheaf of $E\times G$ by the relation $\sim$ defined, on each $E(S)\times G(S)$,
by $(e,g)\sim(e',g')$ whenever $(eh,g)\sim(e',hg')$ for some $h\in H(S)$, and the structural projection of $E\wedge^H G$ is the projection on the first component. By \cite[III.4.4.5]{DG} (or \cite[2.4.3]{G2}), the kernel of $i^*$ is in bijection with the set of orbits
of the left action of $G(R)$ on $(G/H)(R)$, where $G/H$ is the fppf quotient. This bijection is given by assigning to the orbit of $x\in (G/H)(R)$ the class of the $H$-torsor
$\Pi^{-1}(x)$ where $\Pi$ is the quotient projection.
\end{Rk}

In order to produce examples of non-trivial torsors, we will use the following straight-forward generalisation of an argument from \cite{G1}. 

\begin{Lma}\label{Lhomotopy} Let $K\in\{\mathbb R,\mathbb C\}$, let $X$ be an affine $K$-scheme, $G$ an affine algebraic $K$-group, $E$ a $G$-torsor over $X$, and assume that $E$ has a $K$-point $e$. If for some $n\in\mathbb N$ the 
homotopy group $\pi_n(G(K),1_G)$ is not a direct factor of the homotopy group $\pi_n(E(K),e)$, then the torsor $E$ is non-trivial. 
\end{Lma}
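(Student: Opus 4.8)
The plan is to exploit the fact that a trivial $G$-torsor $E$ over $X$ with a $K$-point is, as a $K$-variety, isomorphic to $X \times G$; indeed triviality gives a $G$-equivariant isomorphism $E \simeq X \times G$ over $X$, and passing to $K$-points (with $K \in \{\mathbb R, \mathbb C\}$ a topological field) yields a homeomorphism $E(K) \simeq X(K) \times G(K)$ in the real, respectively complex, topology, which moreover is compatible with the $G(K)$-action on the second factor. So first I would assume $E$ is trivial and fix such a homeomorphism, arranging (by translating with a suitable element of $G(K)$, which is a homotopy equivalence of $E(K)$) that the chosen point $e$ corresponds to $(x_0, 1_G)$ for some $x_0 \in X(K)$.

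Next I would apply the functor $\pi_n(-, e)$. Since homotopy groups commute with finite products, $\pi_n(E(K), e) \cong \pi_n(X(K), x_0) \times \pi_n(G(K), 1_G)$, and under this identification the inclusion $G(K) \hookrightarrow E(K)$ as the fibre through $e$ (i.e.\ $g \mapsto$ the point corresponding to $(x_0, g)$) induces the inclusion of the direct factor $\pi_n(G(K), 1_G)$. In particular $\pi_n(G(K), 1_G)$ \emph{is} a direct factor of $\pi_n(E(K), e)$. This contradicts the hypothesis, so $E$ cannot be trivial. The contrapositive is exactly the statement.

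The only genuine point requiring care — and the place where the hypotheses $K \in \{\mathbb R, \mathbb C\}$, $X$ affine, and $G$ affine algebraic are used — is that triviality of the \emph{scheme-theoretic} (fppf) torsor $E$ yields a \emph{topological} product decomposition of $E(K)$. The relevant facts are: (i) by \cite[III.4.1.5]{DG} a trivial $G$-torsor $E$ over $X$ admits a $G$-equivariant isomorphism of schemes $E \simeq X \times_K G$ over $X$; (ii) for $K = \mathbb R$ or $\mathbb C$ the functor of $K$-points on affine $K$-schemes of finite type, equipped with the natural (Euclidean) topology, is product-preserving and takes isomorphisms to homeomorphisms. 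Here one should note that a $G$-torsor over an affine base need not itself be an affine scheme a priori, but this is not needed: the isomorphism in (i) exhibits $E$ as the affine scheme $X \times_K G$, so the Euclidean topology on $E(K)$ is well-defined and the decomposition $E(K) \cong X(K) \times G(K)$ holds as topological spaces with the $G(K)$-action on the right factor. I expect assembling these standard facts cleanly — rather than any deep argument — to be the main (and only real) obstacle; everything else is the elementary behaviour of homotopy groups under finite products.
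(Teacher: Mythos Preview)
Your proposal is correct and follows essentially the same contrapositive strategy as the paper: assume triviality, pass to $K$-points with the Euclidean topology, and deduce that $\pi_n(G(K),1_G)$ is a direct factor of $\pi_n(E(K),e)$. The only cosmetic difference is that the paper extracts merely the continuous retraction $\rho:E(K)\to G(K)$ of the orbit map $g\mapsto e\cdot g$ (which already forces $\pi_n(G(K))$ to split off), whereas you use the full homeomorphism $E(K)\simeq X(K)\times G(K)$ and the product formula for $\pi_n$; these are equivalent, and your careful handling of the base point and of the topology on $K$-points is fine.
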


The condition of the existence of a $K$-point does not imply that $E$ is trivial. Indeed this condition is fulfilled whenever $E$ is an affine algebraic $K$-group, $G$ a closed subgroup, and 
$X=E/G$, with $e$ being the neutral element of $E$.

\begin{proof} We mimic the proof of \cite{G1}. Assume that $E$ is trivial. Then there is a $G$-equivariant morphism $E\to X\times G$ over $X$. This implies the existence of a map
$\rho: E\to G$ that is a retraction of the inclusion $G\to E$ defined by $g\mapsto e_Sg$ for every $K$-ring $S$ and $g\in G(S)$. This in particular implies that the corresponding
inclusion $G(K)\to E(K)$ admits a continuous retraction. It follows that $\pi_n(G(K),1_G)$ is a direct factor of the homotopy group $\pi_n(E(K),e)$. The proof is complete. 
\end{proof}

\subsection{Acknowledgements} I am indebted to Philippe Gille for many fruitful discussions and valuable remarks. I am also grateful to Erhard Neher and
Holger Petersson for their interest and encouragement and for enriching conversations and comments.
                    
\section{Isometries, Isotopes and $\mathrm{F}_4$-torsors}\label{S2}
\subsection{Isometry and Automorphism Groups}
Let $A$ be an Albert algebra over $R$ with norm $N$. We begin by determining the groups $\Aut(A)$ and $\Isom(N)$. While this is done to varying degree in the existing literature, we
include the proofs of the below results as there is no single reference to which we can refer. Recall that the subgroup $\Isom(N)$ of $\bGL(A)$ is defined, for each $R$-ring $S$, by
\[\Isom(N)(S)=\{\phi\in \bGL(A)(S)|N_S\circ\phi=N_S\},\]
where $N_S$ is the norm of $A_S=A\otimes S$. Let further $\bS_N$ be the cubic sphere of $A$, i.e.\ the $R$-group functor defined, for each $R$-ring $S$, by
\[\bS_N(S)=\{x\in A_S|N_S(x)=1\}.\]
Clearly, $\Isom(N)$ acts on $\bS_N$ by $\phi\cdot x=\phi(x)$ for each $R$-ring $S$, $x\in\bS_N(S)$ and $\phi\in\Isom(N)(S)$. We shall refer to this simply as \emph{the action of $\Isom(N)$ on $\bS_N$}.

\begin{Prp} The group $\Aut(A)$ is a semisimple algebraic group of type $\mathrm F_4$, and $\Isom(N)$ is a semisimple
simply connected algebraic group of type $\mathrm E_6$.
\end{Prp}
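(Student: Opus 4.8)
The plan is to reduce everything to the split (equivalently geometrically split) case and then invoke the structure theory of reductive group schemes together with known facts over algebraically closed fields. Concretely, both $\Aut(A)$ and $\Isom(N)$ are closed subgroup schemes of $\bGL(A)$ cut out by polynomial equations (preservation of the $U$-operator, resp.\ of the cubic form $N$), hence are affine $R$-group schemes of finite presentation. Since an Albert algebra is, by definition, locally (in the fppf, indeed \'etale, topology) isomorphic to the split Albert algebra $A_0$, and since the formation of both $\Aut$ and $\Isom(N)$ commutes with arbitrary base change, it suffices to identify these group schemes after a faithfully flat base change, and then descend the properties ``smooth'', ``affine'', ``connected fibres'', ``semisimple'', ``simply connected'', and ``of type $X$'' — all of which are fppf-local on the base. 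Thus the problem reduces to: (i) the case $R = \Z$ or $R$ a field with $A = A_0$ split, and ultimately (ii) the case $R = k$ an algebraically closed field.

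First I would treat $\Aut(A)$. Over an algebraically closed field this is the classical statement that the automorphism group of the split Albert algebra is the split simple group of type $\mathrm F_4$ (adjoint, as $\mathrm F_4$ has trivial centre and fundamental group); I would cite the standard reference (e.g.\ Springer--Veldkamp, or \cite{KMRT}). To promote this to a statement over $R$, I would first establish that $\Aut(A_0)$ is \emph{smooth} over $R$ — this can be done either by exhibiting it over $\Z$ as a Chevalley group scheme of type $\mathrm F_4$ via an explicit split realisation of $A_0$, or by a deformation/lifting argument using that the obstruction lives in a cohomology group that vanishes because the relevant derivation module is well-behaved; smoothness plus the computation of all geometric fibres (which are all the group of type $\mathrm F_4$ by the field case, applied to every residue field after base change to its algebraic closure) plus connectedness of the fibres yields a semisimple group scheme of type $\mathrm F_4$ by \cite{SGA3}. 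Then for general $A$ one descends along the fppf cover trivialising $A$.

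For $\Isom(N)$ the strategy is the same, with one extra point: over an algebraically closed field, the isometry group of the Albert cubic norm is the split \emph{simply connected} group of type $\mathrm E_6$ — this is again classical (the norm form of the split Albert algebra is the ``determinant'' cubic whose isometry group is the $27$-dimensional representation of $\mathrm E_6^{\mathrm{sc}}$, up to the centre $\bmu_3$ which acts by scalars but scalars $\ne 1$ do not preserve a cubic form unless they are cube roots of unity, and those \emph{are} contained in $\Isom(N)$, accounting for the centre). I would verify smoothness of $\Isom(N)$ over $R$ (for $A = A_0$) by the same fibrewise-smooth-plus-flat argument, identify all geometric fibres as the group of type $\mathrm E_6$, check connectedness, conclude via \cite{SGA3} that it is a semisimple $R$-group scheme of type $\mathrm E_6$, and finally pin down that it is the simply connected form rather than the adjoint one (or an intermediate isogeny type) by exhibiting the central $\bmu_3 \subset \Isom(N)$ acting by scalar multiplication — e.g.\ via the cube roots of unity in $\bmu_3(S)$ scaling $A_S$ — which shows the centre is $\bmu_3$, forcing the simply connected type. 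The main obstacle I anticipate is the smoothness verification over a general (possibly non-reduced, possibly mixed-characteristic) base: one must be careful that the scheme-theoretic stabiliser/isometry functor is smooth and not merely flat, and the cleanest route is probably to realise these group schemes over $\Z$ as Chevalley group schemes by an explicit construction and then base change, rather than arguing abstractly fibre by fibre; alternatively one can cite that $\Aut$ of an Albert algebra and $\Isom$ of its norm are known to be smooth from the literature on exceptional groups over rings (e.g.\ \cite{P}, \cite{KMRT}), which is likely the route the author takes.
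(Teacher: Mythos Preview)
Your overall architecture matches the paper's: reduce via fppf descent to the split algebra over $\Z$, note affineness and finite presentation, and then establish smoothness via a fibrewise criterion. The paper invokes precisely this reduction, citing \cite[Lemma B.1]{AG} for the passage from fibrewise smooth, connected, and equidimensional to smooth over $\Z$.

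Where you and the paper diverge is in how the geometric-fibre case is handled. You defer to standard references (Springer--Veldkamp, \cite{KMRT}), but the paper explicitly notes that \cite{SV} only treats characteristic $\ne 2,3$, so a uniform argument is needed. The paper's accompanying lemma therefore does real work over an arbitrary algebraically closed field: smoothness of $\Isom(N)$ is obtained from the exact sequence $1\to\Isom(N)\to\bStr(A)\to\Gm\to 1$ by showing the differential of the multiplier map is surjective (using that $\bStr(A)$ is smooth by \cite{L}); the type and simple connectedness are then read off from Springer's J-structure theory \cite{S}, which works in all characteristics and gives that $\bStr(A)$ is a central $\Gm$ times a simply connected group of type $\mathrm E_6$. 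Connectedness of $\Isom(N)$---which you do not address---is deduced from its transitive action on the irreducible variety $\bS_N$ with connected stabiliser $\Aut(A)$.

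Your alternative route to simple connectedness via the central $\bmu_3$ of scalar cube roots of unity is valid and arguably more direct than extracting it from Springer's structure-group decomposition; conversely, the paper's exact-sequence argument for smoothness and its connectedness argument are self-contained in a way that citing references (which may not cover small characteristics) is not.
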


In the case where $R$ is a field of characteristic different from 2 and 3, the result is due to \cite[Theorems 7.2.1 and 7.3.2]{SV}

It is known (see \cite[Theorem 17]{P}) that there is a faithfully flat $R$-ring $S$ such that
$A_S$ is split, hence isomorphic to $A^s\otimes_\Z S$, where $A^s$ is the split $\Z$-Albert algebra $H_3(C^s)$ with norm $N^s$, $C^s$ being the split octonion algebra over $\Z$.
To unburden notation, we set $\bG=\Isom(N)$ and $\bH=\bAut(A)$, which are group schemes over $R$, and $\bG^s=\Isom(N^s)$ and $\bH^s=\bAut(A^s)$, which are group schemes over $\Z$.

\begin{proof} Since $\bG$ and $\bH$ are forms of $\bG^s_R$ and $\bH^s_R$, respectively, for the fppf topology, it suffices to establish the claims for $\bG^s$ and $\bH^s$, i.e.\ to 
prove that they are smooth affine $\Z$-group schemes with 
connected, semisimple, simply connected geometric fibres of the appropriate types. Now $\bG^s$ and $\bH^s$ are affine since they are closed subschemes of the affine scheme $\bGL(A^s)$. To prove smoothness, it suffices, by \cite[Lemma B.1]{AG}, to show that $\bG^s$ and $\bH^s$
are finitely presented and fibre-wise (i.e.\ over any field) smooth, connected and equidimensional. Finite presentation is clear. 
Since a group over a field is smooth if and only if its scalar extension to
an algebraic closure is, smoothness of the fibres follows from the lemma below, which also implies that the fibres are connected and equidimensional. 
Thus the groups are smooth, and the lemma also implies that they have semisimple, simply connected geometric fibres of the appropriate types. The proof is then complete.
\end{proof}

\begin{Lma}\label{L1} Assume that $R=k$ is an algebraically closed field. Then $\Aut(A)$ is a semisimple algebraic group of type $\mathrm F_4$, and $\Isom(N)$ is a semisimple
simply connected algebraic group of type $\mathrm E_6$. 
\end{Lma}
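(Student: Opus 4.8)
The plan is to reduce everything to the classical theory of the split Albert algebra over an algebraically closed field and then invoke known structural results. Since $k$ is algebraically closed, every Albert algebra over $k$ is split, so we may assume $A=H_3(C^s_k)$ with norm $N$ the generic cubic norm. The strategy is: first handle $\Aut(A)$, then $\Isom(N)$, using that the latter is generated by $\Aut(A)$ together with the ``scalar-like'' norm similarities (the $U$-operators of invertible elements, suitably rescaled).

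For $\Aut(A)$, I would first observe that $A$ contains the diagonal split subalgebra $k\times k\times k$ spanned by the three orthogonal idempotents $E_1,E_2,E_3$, and that the stabilizer of the idempotent frame $(E_1,E_2,E_3)$ acts on the off-diagonal ``coordinate'' octonion summands. Concretely, one knows (e.g.\ from \cite{SV} in characteristic zero, but the argument is characteristic-free once one works with the cubic norm structure and quadratic $U$-operator formalism of \cite{P}) that this stabilizer is isomorphic to $\Aut(C^s_k)=\mathbf G_2$, acting diagonally on the three octonion slots. One then shows $\Aut(A)$ acts transitively on the variety of such frames, whose tangent space computation pins down the dimension: $\dim\Aut(A)=\dim\mathbf G_2+\dim(\text{frame variety})=14+38=52$. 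Together with the fact that $\Aut(A)$ is smooth, connected, and contains a semisimple subgroup of type $\mathrm G_2$ (hence has rank $\geq 2$; a maximal torus argument or the presence of the rank-$4$ diagonal automorphism torus gives rank exactly $4$), and that it is semisimple (it has no nontrivial normal unipotent or central torus — e.g.\ because $A$ is simple and the trace form is nondegenerate, giving a faithful irreducible $26$-dimensional representation on the trace-zero subspace), one concludes it is a simple group of rank $4$ and dimension $52$, which forces type $\mathrm F_4$. Alternatively, and more cleanly, one may cite that $A_{\bar k}$ is the unique split Albert algebra and that its automorphism group is well-documented to be $\mathrm F_4$; but I would want at least the dimension and rank computations made explicit to be self-contained.

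For $\Isom(N)$: the cross product and adjoint make $N$ a generic cubic form whose full isometry group is classically of type $\mathrm E_6$. I would argue that $\Isom(N)$ contains $\Aut(A)$ (automorphisms preserve the norm) as well as, for each invertible $p\in A$ with $N(p)=1$, the operator $U_p$, which satisfies $N\circ U_p = N(p)^2 N = N$; these $U_p$ generate, together with $\Aut(A)$, a subgroup that acts transitively on $\bS_N$ with stabilizer of the base point $1_A$ equal to (a group containing, in fact equal to) $\Aut(A)$. A dimension count then gives $\dim\Isom(N) = \dim\Aut(A)+\dim\bS_N = 52 + 26 = 78$, the dimension of $\mathrm E_6$. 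Rank $4$ lifts to rank $6$ once one adjoints the diagonal $U$-operators $U_{\diag(t_1,t_2,t_3)}$ with $t_1t_2t_3=1$, giving a $2$-dimensional extra torus, so rank $\geq 6$; semisimplicity follows as before from the faithful action on $A$ and simplicity of $A$, and $\Isom(N)$ is smooth and connected by the same argument sketched for $\Aut(A)$ (transitivity on a smooth connected variety with smooth connected stabilizer). A simple group of rank $6$ and dimension $78$ is of type $\mathrm E_6$. Finally, simple connectedness: the center of $\Isom(N)$ acts on $A=A_0\oplus k1_A$ (trace decomposition) by scalars on each summand preserving $N$, so is isomorphic to $\bmu_3$ (cube roots of unity acting as $\zeta$ on $A_0$, $\zeta^{?}$ on $k1_A$ — one checks the action on $N$ forces a $\bmu_3$); since the simply connected $\mathrm E_6$ has center $\bmu_3$ while adjoint $\mathrm E_6$ has trivial center, the nontrivial $\bmu_3$ center identifies $\Isom(N)$ as the simply connected form.

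The main obstacle is the characteristic-free execution of the two transitivity claims and the smoothness/connectedness of the stabilizers — over fields of characteristic $2$ or $3$ the naive $U$-operator and frame arguments need care, and one cannot simply quote \cite{SV}, which assumes $\ch k\neq 2,3$. I would circumvent this by working throughout with the cubic norm structure axioms from \cite[5.2]{P}, which are characteristic-free, and by verifying smoothness via the Lie algebra: computing $\Lie(\Aut(A))$ as the derivations of $A$ and $\Lie(\Isom(N))$ as the infinitesimal norm isometries, checking these have dimensions $52$ and $78$ in every characteristic (this is where the real work lies, and where \cite{P} or the derivation-algebra literature for Albert algebras must be invoked), and then applying the smoothness criterion $\dim\Lie = \dim G$ for the (a priori possibly non-reduced) group scheme. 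Once smoothness is in hand, the geometric-point type identification is classical.
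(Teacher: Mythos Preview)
Your outline contains a substantive error in the $\Aut(A)$ part: the stabilizer in $\Aut(A)$ of the idempotent frame $(E_1,E_2,E_3)$ is \emph{not} $\Aut(C^s_k)$ (type $\mathrm G_2$, dimension $14$) but the related-triality group of type $\mathrm D_4$ and dimension $28$, isomorphic to $\Spin_8$. An automorphism fixing each $E_i$ acts on each off-diagonal octonion slot by an orthogonal transformation, but the three transformations $(t_1,t_2,t_3)$ need not coincide; preservation of the cubic norm forces only the triality relation $t_1(x\cdot y)=t_3(x)\cdot t_2(y)$ (this is exactly Lemma~\ref{Lincl} and Proposition~\ref{stabidem} of the paper). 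The diagonal embedding $t\mapsto(t,t,t)$ of $\Aut(C^s_k)$ is a proper subgroup. Consequently the frame variety $\bF_A$ has dimension $52-28=24$, not $38$, and your equation $14+38=52$ rests on an incorrect tangent-space claim. With the correct stabilizer the count $28+24=52$ works, but you then need an independent determination of $\dim\bF_A=24$, and your rank argument for $\Aut(A)$ (which leaned on the $\mathrm G_2$ subgroup) would need adjustment.

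The paper takes a different route, avoiding dimension and rank counts altogether. It identifies $A$ with a J-structure in the sense of Springer \cite{S} (equivalently an H-structure in the sense of \cite{M}) and invokes \cite[14.20]{S} directly for the type identifications of $\bAut(A)$ and of $\bStr(A)$; Springer's treatment is characteristic-free, which sidesteps the obstacle you flag at the end. Smoothness of $\Isom(N)$ is obtained not by a Lie-algebra dimension comparison but via the short exact sequence $1\to\Isom(N)\to\bStr(A)\to\Gm\to 1$: since $\bStr(A)$ is smooth by \cite{L} and the differential of the multiplier map is surjective (checked on $\Id\in\Lie(\bStr(A))$), the kernel $\Isom(N)$ is smooth. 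Simple connectedness of $\Isom(N)$ and its identification with the $\mathrm E_6$ factor of $\bStr(A)$ then fall out of the exact sequence once connectedness is established; the latter is proved by showing $\Isom(N)$ acts transitively on the irreducible variety $\bS_N$ (irreducibility of $N$ following from that of the $3\times 3$ determinant on the subalgebra $M_3(k)\subset A$) with connected stabilizer $\Aut(A)$. Your $\bmu_3$-center computation is in the right spirit but is bypassed by this argument.
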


\begin{proof} First we address smoothness. The group $\Aut(A)$ is smooth (see e.g.\ \cite{ALM}). For $\Isom(N)$, consider the exact sequence of 
$k$-group schemes
\[\xymatrix{1\ar[r]&\Isom(N)\ar[r]^f&\bStr(A)\ar[r]^\mu&\Gm\ar[r]&1},\]
where the structure group $\bStr(A)$ is the group of norm similarities with respect to $N$ (the field being infinite) and $\mu$ is the multiplier map $\phi\mapsto\mu_\phi=N(\phi(1))$.
By \cite[Corollary 6.6]{L}, $\bStr(A)$ is smooth. For $\Isom(N)$ to be smooth it is necessary and sufficient that the differential $d\mu$ be a surjective map of Lie algebras.
But $\Id+\varepsilon\Id\in\bStr(A)(k[\varepsilon])$ (where $\varepsilon^2=0$) and $\mu_{\Id+\varepsilon\Id}=1+\varepsilon$. Thus $\Id\in\Lie(\bStr(A))$, and its image under $d\mu$ is $1\in k=\Lie(\Gm)$. Surjectivity
follows by linearity. 

Let $j$ be the inversion on $A$, i.e.\ the birational map on $A$ defined by $x\mapsto x^{-1}$ for all $x\in A^*$. Then $\Sigma=(A,j,1_A)$ is an H-structure in the sense of 
\cite{M}, as well as a J-structure in the sense of \cite{S}. By \cite{S}, the structure group $\bS(j)$ of $j$ is a smooth closed subgroup of $\bGL(A)$, and so is the 
automorphism group of $\Sigma$ as a J-structure. By definition a linear automorphism of $A$ is an automorphism of $\Sigma$ as a J-structure if and only if it is an automorphism of $\Sigma$ as an 
H-structure. By \cite{M} we thus have $\bAut(\Sigma)(k)\simeq\bAut(A)(k)$ and $\bS(j)(k)\simeq\bStr(A)(k)$. Thus since $k$ is algebraically closed, and all groups involved are smooth, 
$\bAut(A)\simeq\bAut(\Sigma)$ and $\bStr(A)\simeq\bS(j)$.
The statement about $\bAut(A)$ then follows from \cite[14.20]{S}, as does the statement that $\bStr(A)$ is the product of its one-dimensional centre and a semisimple, simply 
connected algebraic group $\bG'$ of type $\mathrm E_6$. From this and the above exact sequence, it follows that $f$ induces an isomorphism $\Isom(N)^\circ\to \bG'$, and it remains to be
shown that $\Isom(N)$ is connected. By Proposition 3.7 and Lemma 3.9 of \cite{ALM}, the group $\bStr(A)(k)$ acts transitively on $A^*$ ($k$ being algebraically closed). If $a\in A^*$ has norm 1
and $\phi\in\bStr(A)(k)$ satisfies $\phi(1)=a$, then necessarily $\phi\in\Isom(N)(k)$. Thus $\Isom(N)$ acts transitively on $\bS_N$, which is an irreducible $k$-variety since $N$ is irreducible.
(The irreducibility of $N$ follows from the fact that its restriction on $M_3(k)$ is the usual determinant, which is irreducible, as shown in e.g.\ \cite[Theorem 7.2]{J2}.) By \cite{ALM}, the stabiliser of $1$ is $\bAut(A)$, which
is connected. Thus $\Isom(N)$ is connected. Hence it is a semisimple, simply connected group of type $\mathrm E_6$, and the proof is complete.
\end{proof}

\subsection{Isotopes}
If $p\in A^*$, then the \emph{$p$-isotope} $A^{(p)}$ of $A$ is the Jordan algebra with unity $p^{-1}$ and $U$-operator $x\mapsto U^{(p)}_x:=U_xU_p$, where $U$ is the
$U$-operator of $A$. It is known that $A^{(p)}$ is an Albert algebra, and one can check that the map $\lambda U_p:A^{(\lambda U_pp)}\to A^{(p)}$ is an isomorphism
for any $\lambda\in R^*$, in particular for $\lambda=N(p)^{-1}$, in which case $N(\lambda U_pp)=1$. Thus up to isomorphism one may assume that $N(p)=1$. In that case the norm of $A^{(p)}$ coincides
with that of $A$, and the adjoint is given by $x\mapsto U_{p^{-1}}x^\sharp$. In this section we will show that isotopes of Albert algebras are twists by a certain torsor. To begin with, the following
proposition provides the torsor.

\begin{Prp}\label{Psphere} The stabiliser of $1_A$ with respect to the action of $\Isom(N)$ on $\bS_N$ is $\bAut(A)$, and the fppf quotient sheaf $\Isom(N)/\bAut(A)$ is representable by a smooth scheme. Furthermore, the map $\Pi:\Isom(N)\to\bS_N$, defined by $\phi\mapsto \phi(1)^{-1}$ for any $R$-ring $S$ and $\phi\in\Isom(N)(S)$, induces an isomorphism between the quotient
and $\bS_N$. 
\end{Prp}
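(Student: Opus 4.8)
The plan is to exhibit $\Pi$ as a torsor under $\bAut(A)$ for the fppf topology; both assertions then follow at once, since the base of such a torsor represents the quotient sheaf $\Isom(N)/\bAut(A)$, and smoothness of the total space and of the structure group descends to it. First I would pin down the stabiliser, showing $\Stab_{\Isom(N)}(1_A)=\bAut(A)$ as subfunctors of $\bGL(A)$. One inclusion is immediate: for any $R$-ring $S$, an automorphism of the Jordan algebra $A_S$ fixes $1_{A_S}$ and preserves the cubic norm $N_S$, the norm being an invariant of the cubic Jordan structure (\cite{P}); so $\bAut(A)(S)\subseteq\Stab_{\Isom(N)}(1_A)(S)$. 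Conversely, let $\phi\in\Isom(N)(S)$ with $\phi(1_A)=1_A$. Comparing the coefficients of the polynomial $t\mapsto N_S(1_A+tx)$ for $\phi(x)$ and for $x$ shows that $\phi$ preserves the linear trace and, after polarisation, the bilinear trace $T_S$; since $T$ is a nonsingular (in fact unimodular, hence base-change stable) bilinear form on an Albert algebra, the identity $T(x^\sharp,y)=\tfrac{d}{dt}N(x+ty)\big|_{t=0}$ characterising the adjoint forces $\phi$ to commute with $x\mapsto x^\sharp$, hence with the cross product; by \eqref{U}, $\phi\in\bAut(A)(S)$. I expect this step to be routine.

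Next I would record that $\bS_N$ is a smooth affine $R$-scheme. It is $R$-flat, because its defining equation $N-1$ in $\mathbb{A}(A)$ reduces modulo each prime of $R$ to a nonzero cubic, hence to a nonzerodivisor in the polynomial ring over the corresponding domain. Its fibres over the residue fields are smooth by the Jacobian criterion: at a point $x$ the differential of $N-1$ is the linear form $y\mapsto T(x^\sharp,y)$, which is nonzero since on $\bS_N$ one has $N(x)=1\in R^{*}$, so $x$ is invertible with $x^\sharp=N(x)x^{-1}=x^{-1}\neq 0$, and $T$ is nonsingular. A finitely presented, $R$-flat scheme with smooth fibres is smooth over $R$, so $\bS_N$ is smooth of relative dimension $26$. (Alternatively this follows a posteriori from the torsor structure below.)

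The crux is then to verify that $\Pi\co\Isom(N)\to\bS_N$ is an fppf torsor under the right-translation action of $\bAut(A)$. It is $\bAut(A)$-invariant: $\Pi(\phi\psi)=(\phi\psi(1_A))^{-1}=(\phi(1_A))^{-1}=\Pi(\phi)$ for $\psi\in\bAut(A)(S)$. The canonical morphism $\Isom(N)\times\bAut(A)\to\Isom(N)\times_{\bS_N}\Isom(N)$, $(\phi,\psi)\mapsto(\phi,\phi\psi)$, is an isomorphism, with inverse $(\phi,\phi')\mapsto(\phi,\phi^{-1}\phi')$: this is well defined because $\Pi(\phi)=\Pi(\phi')$ gives $\phi(1_A)=\phi'(1_A)$, so $\phi^{-1}\phi'\in\Stab_{\Isom(N)}(1_A)=\bAut(A)$ by the first step. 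Hence $\Pi$ is formally principal homogeneous under $\bAut(A)$, and to conclude it is a torsor I must show $\Pi$ is faithfully flat. For surjectivity: over the algebraic closure $\overline{\kappa(\mathfrak p)}$ of each residue field, $\Isom(N)$ acts transitively on $\bS_N$ by Lemma \ref{L1}, so $\Pi_{\overline{\kappa(\mathfrak p)}}$, and therefore $\Pi$, is surjective. For flatness: with $\Isom(N)$ and $\bS_N$ both flat and of finite presentation over $R$, the fibrewise flatness criterion reduces the claim to the flatness of each $\Pi_{\overline{\kappa(\mathfrak p)}}$, which in turn follows from miracle flatness, since $\Isom(N)_{\overline{\kappa(\mathfrak p)}}$ is Cohen--Macaulay (smooth and connected, of dimension $78$), $(\bS_N)_{\overline{\kappa(\mathfrak p)}}$ is regular (smooth and irreducible, of dimension $26$, by Lemma \ref{L1}), and every nonempty fibre of $\Pi_{\overline{\kappa(\mathfrak p)}}$ is a left coset of $\bAut(A)_{\overline{\kappa(\mathfrak p)}}$, hence equidimensional of dimension $52=78-26$. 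Thus $\Pi$ is an fppf $\bAut(A)$-torsor: it is an epimorphism of fppf sheaves with kernel pair $\Isom(N)\times\bAut(A)$, so it identifies $\bS_N$ with the fppf quotient $\Isom(N)/\bAut(A)$, which is thereby representable by the smooth scheme $\bS_N$.

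The step I expect to be the main obstacle is, as usual over a non-field base, the faithful flatness of the orbit map $\Pi$ — concretely, reducing its flatness to the geometric fibres via the fibrewise criterion and then invoking the transitivity supplied by Lemma \ref{L1}. The stabiliser identity, the $\bAut(A)$-invariance and the kernel-pair computation are formal, and the facts about the trace form and the smoothness of $\bS_N$ are standard, though they call for a little care in characteristics $2$ and $3$.
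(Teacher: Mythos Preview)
Your argument is correct, and it reaches the same conclusion by a genuinely different route than the paper. The paper first invokes the general representability theorems of \cite[XVI.2.2, VI$_{\mathrm B}$.9.2]{SGA3} (using smoothness of $\bAut(A)$ and $\Isom(N)$) to obtain the quotient $\Isom(N)/\bAut(A)$ as a smooth scheme, and then appeals to \cite[III.3.2.1]{DG} to identify it with $\bS_N$, the transitivity on geometric fibres and the stabiliser identity both being imported from \cite{ALM} via Lemma~\ref{L1}. You instead verify directly that $\Pi$ is an fppf $\bAut(A)$-torsor: you prove the stabiliser identity by hand from the cubic-norm-structure relations (rather than citing \cite{ALM}), establish the kernel-pair isomorphism, and obtain faithful flatness of $\Pi$ via the crit\`ere de platitude par fibres combined with miracle flatness on geometric fibres. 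Representability and smoothness of the quotient then fall out for free. Your approach is more self-contained and makes the smoothness of $\bS_N$ explicit (the paper gets it only \emph{a posteriori}); the paper's approach is shorter and leans on the standard machinery. Both ultimately rest on the same geometric input, namely the transitivity of $\Isom(N)$ on $\bS_N$ over algebraically closed fields furnished by Lemma~\ref{L1}. One small remark: your flatness argument for $\bS_N$ over $R$ is cleanest phrased as ``$N-1$ has unit constant term, hence by McCoy's theorem is a nonzerodivisor modulo every ideal of $R$, so the hypersurface is $R$-flat'', which is what your sentence about reduction modulo primes is reaching for.
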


\begin{proof} By the results of \cite{ALM} quoted in the proof of Lemma \ref{L1}, the action is transitive (in the sense of being transitive on geometric fibres) 
and the stabiliser of $1$ is $\bAut(A)$. Then since $\bAut(A)$ is flat (in fact smooth) and $\Isom(N)$ is smooth, $\Isom(N)/\bAut(A)$ is representable by a smooth scheme by 
\cite[XVI.2.2 and VIB.9.2]{SGA3}. The induced map
is then an isomorphism by \cite[III.3.2.1]{DG}.
\end{proof}

This defines an $\bAut(A)$-torsor over $\bS_N$ for the fppf topology, 
and we denote by $\bE^p$ the fibre of $p\in \bS_N(R)$, which is an $\bAut(A)$-torsor over $\Spec(R)$. The next theorem shows that the isotopes of $A$ are precisely the twists of $A$ 
by this torsor. Before formulating it, we shall
define precisely what we mean by a twist of a (quadratic) Jordan algebra.

Let $A$ be a Jordan algebra and $\bE$ a (right) $\bAut(A)$-torsor over $\Spec(R)$, and denote by $\bW(A)$ the vector group scheme defined by 
$\bW(A)(S)=A_S$ for
every $R$-ring $S$. The \emph{twist $\bE\wedge A$ of $A$ by $\bE$} is the sheaf of Jordan algebras associated to the presheaf
\[S\mapsto (\bE(S)\times A_S)/\sim\]
defined as follows: for each $R$-ring $S$, the equivalence relation $\sim$ is defined by $(\phi,a)\sim(\psi,b)$ if and only if $(\phi \rho^{-1},\rho(a))=(\psi,b)$ for some 
$\rho\in\bAut(A)(S)$. Denoting the equivalence classes by square brackets, the additive structure is given by
\[[\phi,x]+[\psi,y]=[\phi,x+\phi^{-1}\psi(y)],\]
the $S$-module structure by
\[\lambda[\phi,x]=[\phi,\lambda x],\]
the unity by $[\phi,1_A]$, and the $U$-operator by
\[U_{[\phi,x]}[\psi,y]=[\phi,U_x\phi^{-1}\psi(y)],\]
for any $\phi,\psi\in \bE(S)$, $x,y\in A_S$ and $\lambda\in S$. It is straight-forward to check that these expressions are well-defined on equivalence classes, and satisfy the axioms
of a Jordan algebra. 

\begin{Rk} If $\phi,\psi\in \bE(S)$, then $\rho=\phi^{-1}\psi\in\bAut(A)(S)$. Thus
\[[\psi,y]=[\psi\rho^{-1},\rho(y)]=[\phi,\phi^{-1}\psi(y)].\]
Thus the above definition is equivalent to the definition
\[U_{[\phi,x]}[\phi,y]=[\phi,U_xy].\]
Note further that in an algebra with bilinear multiplication, the twist of the multiplication is defined by
\[(\phi,x)(\psi,y)=(\phi,x\phi^{-1}\psi(y)).\]
If $2\in R^*$, then $A$ is endowed with a bilinear multiplication, and the $U$-operator is defied by
\[U_xy=2x(xy)-x^2y,\]
and one easily checks that the twist of the $U$-operator defined above is precisely the one obtained in this way from the twist of the bilinear multiplication.
\end{Rk}

\begin{Thm} Let $p\in \bS_N(R)$. The map
\[\Theta_S:[\phi,x]\to \phi(x)\]
for each $R$-ring $S$, $\phi\in\bE^p(S)$, and $x\in A_S$, defines a natural isomorphism of algebras $\bE^p\wedge A\to A^{(p)}$. 
\end{Thm}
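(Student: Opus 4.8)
The plan is to verify directly that $\Theta_S$ is well-defined on equivalence classes, is an isomorphism of $S$-modules, and respects the base point and the $U$-operator, for each $R$-ring $S$, and to check naturality in $S$. First I would address well-definedness: if $[\phi,x]=[\psi,y]$ in $(\bE^p(S)\times A_S)/\!\sim$, then $\psi=\phi\rho^{-1}$ and $y=\rho(x)$ for some $\rho\in\bAut(A)(S)$, whence $\psi(y)=\phi\rho^{-1}\rho(x)=\phi(x)$; so $\Theta_S$ descends to the quotient presheaf, and since $A^{(p)}$ is already a sheaf, it factors through the associated sheaf $\bE^p\wedge A$. Because every $\phi\in\bE^p(S)\subseteq\bGL(A)(S)$ is an $S$-module automorphism of $A_S$, the map $\Theta_S$ is $S$-linear and bijective on each fibre, and one checks additivity against the formula $[\phi,x]+[\psi,y]=[\phi,x+\phi^{-1}\psi(y)]$, on which $\Theta_S$ gives $\phi(x)+\phi\phi^{-1}\psi(y)=\phi(x)+\psi(y)$, as required; $S$-linearity is immediate from $\lambda[\phi,x]=[\phi,\lambda x]$.

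The heart of the argument is the compatibility with the $U$-operator and the unit. Using the equivalent description in the Remark, $U_{[\phi,x]}[\phi,y]=[\phi,U_xy]$, so $\Theta_S\!\left(U_{[\phi,x]}[\phi,y]\right)=\phi(U_xy)$, and I must show this equals $U^{(p)}_{\phi(x)}\phi(y)=U_{\phi(x)}U_p\phi(y)$, where $U$ on the right is the $U$-operator of $A$. The key input is the precise relation between $\bE^p$ and the point $p$: by construction $\bE^p$ is the fibre over $p$ of the map $\Pi\colon\Isom(N)\to\bS_N$, $\phi\mapsto\phi(1)^{-1}$, so $\phi\in\bE^p(S)$ means $\phi(1)^{-1}=p$, i.e. $\phi(1)=p^{-1}$; moreover $\phi\in\Isom(N)(S)$ means $N_S\circ\phi=N_S$. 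I would then invoke the standard fact (valid for cubic Jordan algebras, e.g. from the theory in \cite{P}) that a norm isometry $\phi$ with $\phi(1)=p^{-1}$ is precisely an isomorphism $A\to A^{(p)}$ of Jordan algebras — equivalently, that such a $\phi$ satisfies $U_{\phi(x)}=\phi\, U_x\, U_{\phi(1)}\,\phi^{-1}$. Granting this identity, $U_{\phi(x)}U_p\phi(y)=\phi\,U_x\,U_{p^{-1}}\phi^{-1}U_p\phi(y)$; since $\phi^{-1}U_p\phi=U_{\phi^{-1}(p)}$ and $\phi(1)=p^{-1}$ forces $\phi^{-1}(p^{-1})=1$, hence $\phi^{-1}(p)=U_{\phi^{-1}(p^{-1})}^{-1}(\ldots)$ — more directly, $U_{p^{-1}}\phi^{-1}U_p\phi=U_{p^{-1}}U_{\phi^{-1}(p)}$ and one uses $p^{-1}=\phi(1)$ together with $\phi^{-1}(p)=p\!\cdot\!$-inverse manipulations to collapse this to $\Id$, leaving $\phi\,U_x\,\phi^{-1}\cdot\phi(y)=\phi(U_xy)$, as desired. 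The unit of $\bE^p\wedge A$ is $[\phi,1_A]$, which maps to $\phi(1_A)=p^{-1}$, the unit of $A^{(p)}$; this is consistent and in fact another manifestation of the same isometry-isotopy correspondence.

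Finally, naturality in $S$ is a formal check: for an $R$-ring map $S\to S'$, base change commutes with $\Pi$, with the torsor $\bE^p$, with $\bW(A)$, and with the formation of $A^{(p)}$, and $\Theta$ is given by the same formula $[\phi,x]\mapsto\phi(x)$ at every stage, so the relevant square commutes. The main obstacle I anticipate is pinning down and correctly bookkeeping the cubic-Jordan identity $U_{\phi(x)}=\phi\,U_x\,U_{\phi(1)}\,\phi^{-1}$ for a norm isometry $\phi$: one must be careful that $\Isom(N)$ (norm-preserving), as opposed to the larger structure group $\bStr(A)$, still yields this relation with the correct multiplier — here the multiplier is forced to be trivial precisely because $N(p)=1$, so $\phi(1)=p^{-1}$ is itself of norm $1$ — and that all of this holds over an arbitrary base ring, not just a field, which is why one quotes the cubic-norm-structure formalism of \cite{P} rather than a field-theoretic source. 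Once that identity is in hand, the remaining computations are the routine well-definedness and functoriality verifications sketched above.
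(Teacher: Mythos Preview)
Your overall strategy matches the paper's: well-definedness, linearity, bijectivity and naturality are routine (the paper dismisses them as ``straight-forward and analogous to \cite[Theorem 4.6]{AG}''), and the substantive step is the compatibility with the $U$-operator, for which the key input is that any $\phi\in\bE^p(S)$ is a norm isometry with $\phi(1_A)=p^{-1}$, hence a Jordan isomorphism $A\to A^{(p)}$.

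However, your attempt to unpack this last fact into an operator identity goes wrong. The statement ``equivalently, such a $\phi$ satisfies $U_{\phi(x)}=\phi\,U_x\,U_{\phi(1)}\,\phi^{-1}$'' is incorrect: if $\phi:A\to A^{(p)}$ is a Jordan isomorphism then $\phi(U_xy)=U^{(p)}_{\phi(x)}\phi(y)=U_{\phi(x)}U_p\phi(y)$, so $U_{\phi(x)}=\phi\,U_x\,\phi^{-1}\,U_{p^{-1}}=\phi\,U_x\,\phi^{-1}\,U_{\phi(1)}$, with the factor $U_{\phi(1)}$ on the \emph{right}, not sandwiched inside. Your subsequent manipulation then compounds the error: the claim ``$\phi^{-1}U_p\phi=U_{\phi^{-1}(p)}$'' is false for a general norm isometry $\phi$ (it would hold if $\phi$ were an automorphism of $A$, but that is exactly what $\phi$ is not). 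This is why the computation trails off into ``$\ldots$'' without closing.

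The paper sidesteps all of this: once you know $\phi:A\to A^{(p)}$ is a Jordan isomorphism (which is the content of \cite[Corollary 18]{P}: a linear bijection preserving the norm and sending unity to unity is an isomorphism of cubic Jordan algebras, applied with target $A^{(p)}$ whose norm is $N$ and whose unity is $p^{-1}$), you simply write
\[
\Theta_S\bigl(U_{[\phi,x]}[\phi,y]\bigr)=\Theta_S([\phi,U_xy])=\phi(U_xy)=U^{(p)}_{\phi(x)}\phi(y)=U^{(p)}_{\Theta_S[\phi,x]}\Theta_S[\phi,y],
\]
and you are done. There is no need to expand $U^{(p)}$ as $U\cdot U_p$ and chase operator identities; use the isomorphism property of $\phi$ directly.
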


The reader may wish to compare this result to \cite[Theorem 4.6]{AG}.

\begin{proof} The proof that this is a well-defined, injective linear map from $(\bE^p(S)\times A_S)/\sim$ to $A_S^{(p)}$ is straight-forward and analogous to that of \cite[Theorem 4.6]{AG}.
The same goes for surjectivity whenever $\bE^p(S)\neq\emptyset$, and for naturality. This proves that the map in question is a an isomorphism of sheaves of modules. It is a morphism of algebras
since $\Theta_S([\phi,1])=\phi(1)=p^{-1}=1_{A^{(p)}}$, and
\[\Theta_S(U_{[\phi,x]}[\psi,y])=\Theta_S([\phi,U_x\phi^{-1}\psi(y)])=\phi(U_x\phi^{-1}\psi(y)).\]
Now $\phi\in\bE^p(S)$ implies that $\phi\in\Isom(N)(S)$ with $\phi(1_A)=p^{-1}=1_{A^{(p)}}$. Thus $\phi:A\overset{\sim}{\rightarrow} A^{(p)}$, and
\[U_{\phi(x)}^{(p)}\phi(\phi^{-1}\psi(y))=U_{\phi(x)}^{(p)}\psi(y)=U_{\Theta_S[\phi,x]}^{(p)}\Theta_S[\psi,y],\]
which completes the proof. 
\end{proof}

\subsection{Non-Trivial Isotopes}
It is well-known that the above torsor is non-trivial in general; indeed, over $\mathbb R$, the Albert algebras form two isotopy classes, one of which consists of two isomorphism
classes (see \cite[13.7]{P}). However, by e.g.\ \cite[Proposition 20]{P}, any isotope of the split Albert algebra over a field $K$ is isomorphic it. As the next result shows, this is
not the case over rings.

\begin{Thm}\label{TF4} There exists a smooth $\mathbb C$-ring $S$ such that the split Albert $S$-algebra admits a non-isomorphic isotope.
\end{Thm}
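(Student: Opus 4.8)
The plan is to exploit Lemma \ref{Lhomotopy} applied to the $\mathrm E_6$-torsor over $\bS_N$ identified in Proposition \ref{Psphere} and Theorem 2.10, with $A=A^s_{\mathbb C}$ the split Albert algebra over $\mathbb C$. By that theorem, an isotope $A^{(p)}$ is isomorphic to $A$ over an $R$-ring $S$ precisely when the $\bAut(A)$-torsor $\bE^p$ is trivial over $S$, and more to the point, if the generic fibre of $\Pi\colon \Isom(N)\to\bS_N$ has no section, then by Remark \ref{Rtrivial} there is a ring $S=\mathcal O(\bS_N)$ (or a localisation thereof) over which $\Pi_S$ is not surjective, hence the corresponding isotope is not isomorphic to the split Albert algebra. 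So it suffices to show the torsor
\[\xymatrix@R-10pt{\Isom(N^s)_{\mathbb C}\ar[d]^{\bAut(A^s)_{\mathbb C}}\\ (\bS_{N^s})_{\mathbb C}}\]
is non-trivial. Smoothness of $\bS_N$ (Proposition \ref{Psphere}) gives the smoothness of $S$.

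First I would record the relevant topology. Over $\mathbb C$, the groups $G=\Isom(N^s)(\mathbb C)$ and $H=\bAut(A^s)(\mathbb C)$ are the complex points of simply connected groups of type $\mathrm E_6$ and $\mathrm F_4$; as topological spaces they are homotopy equivalent to their maximal compact subgroups, the compact simply connected $\mathrm E_6$ and the compact $\mathrm F_4$. The base $\bS_N(\mathbb C)=\{x\in A^s_{\mathbb C}\mid N^s(x)=1\}$ is the complex affine variety $G/H$, and I need a homotopy group in which $\pi_n(H)$ fails to be a direct summand of $\pi_n(G/H)$ — equivalently, by the long exact sequence of the fibration $H\to G\to G/H$, where the connecting behaviour obstructs a retraction. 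The natural candidate is $\pi_3$: both compact $\mathrm E_6$ and compact $\mathrm F_4$ are simple with $\pi_3\cong\mathbb Z$, and the inclusion $H\hookrightarrow G$ induces multiplication by the Dynkin index of the embedding on $\pi_3$. For $\mathrm F_4\subset\mathrm E_6$ this index is $1$, so in fact $\pi_3(G)\cong\pi_3(H)$ and the map is an isomorphism; then the long exact sequence forces $\pi_3(G/H)=0$ together with $\pi_2(G/H)\hookrightarrow\pi_2(H)=0$, so $G/H$ is $2$-connected, and one must look higher. The cleaner route is to compute $\pi_*(G/H)$ directly: since $G/H$ is the total space of interest, and $\pi_2(G/H)=0$, $\pi_3(G/H)=0$, while $\pi_4(\mathrm F_4)=0$ but $\pi_4(\mathrm E_6)=0$ as well; I would instead use that $\bS_N$ is, up to homotopy, a $26$-dimensional space whose rational cohomology/homotopy can be read off from $G/H$ — the relevant fact is that $\mathrm E_6/\mathrm F_4$ has nontrivial homotopy (indeed it is rationally a product of spheres $S^9\times S^{17}$), and one locates a degree $n$ in which $\pi_n(\mathrm E_6/\mathrm F_4)$ is nonzero while $\pi_n(\mathrm F_4)$ is torsion or zero, so that $\pi_n(\mathrm F_4)$ cannot be a direct factor of $\pi_n(\mathrm E_6)\cong\pi_n(\mathrm F_4)\oplus\pi_n(\mathrm E_6/\mathrm F_4)$ would-be splitting — contradicting triviality.

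The key steps, in order: (1) reduce to non-triviality of the $\mathrm F_4$-torsor $\Isom(N^s)_{\mathbb C}\to(\bS_{N^s})_{\mathbb C}$ via Theorem 2.10 and Remark \ref{Rtrivial}, noting the base is smooth and affine so Lemma \ref{Lhomotopy} applies with the $\mathbb C$-point $e=1_G$; (2) identify the topological spaces $G(\mathbb C)\simeq \mathrm E_6^{c}$, $H(\mathbb C)\simeq \mathrm F_4^{c}$ (compact forms) and $\bS_N(\mathbb C)\simeq \mathrm E_6^c/\mathrm F_4^c$ via the homotopy equivalence with maximal compacts and Proposition \ref{Psphere}; (3) compute a homotopy group of $\mathrm E_6^c/\mathrm F_4^c$ — using the fibration $\mathrm F_4^c\to\mathrm E_6^c\to\mathrm E_6^c/\mathrm F_4^c$ and known homotopy of compact Lie groups (e.g. the well-documented $\pi_*(\mathrm E_6)$, $\pi_*(\mathrm F_4)$ in the literature, or the rational fact $\mathrm E_6^c/\mathrm F_4^c\simeq_{\mathbb Q} S^9\times S^{17}$) — locating $n$ with $\pi_n(\mathrm E_6^c/\mathrm F_4^c)\neq 0$ but this group not a direct summand of $\pi_n(\mathrm E_6^c)$ compatibly; (4) invoke Lemma \ref{Lhomotopy} to conclude non-triviality, hence the existence of the desired $S$.

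The main obstacle is step (3): pinning down a specific degree $n$ and justifying, from tabulated homotopy groups of the exceptional compact Lie groups, that $\pi_n(\mathrm F_4^c)$ is not a direct factor of $\pi_n(\mathrm E_6^c)$. Because the Dynkin index of $\mathrm F_4\subset\mathrm E_6$ is $1$, the low-degree homotopy is too well-behaved and one genuinely has to go to a degree where a torsion or sphere class in $\mathrm E_6^c/\mathrm F_4^c$ appears; the bookkeeping with the long exact homotopy sequence and existing computations (the Mimura–Toda type results) is where the real work lies. Everything else — the reduction via the torsor dictionary and the homotopy-equivalence generalities over $\mathbb C$ — is routine and parallels the octonion case treated in \cite{G1} and \cite{AG}.
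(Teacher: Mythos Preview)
Your overall strategy matches the paper's exactly: reduce via Remark \ref{Rtrivial} and Proposition \ref{Psphere} to non-triviality of the $\Aut(A)$-torsor $\Isom(N)\to\bS_N$ over $\mathbb C$, then apply Lemma \ref{Lhomotopy} after passing to maximal compact subgroups via Cartan decomposition. However, your proposal stops short of executing the decisive step, which you yourself flag as ``the main obstacle'': you never name a degree $n$ in which $\pi_n(\mathrm F_4^c)$ fails to be a direct factor of $\pi_n(\mathrm E_6^c)$. The paper resolves this cleanly with $n=8$: by \cite{Mi}, $\pi_8$ of the compact $\mathrm F_4$ is cyclic of order $2$, while by \cite{BS}, $\pi_8$ of the compact $\mathrm E_6$ is trivial, so $\mathbb Z/2\mathbb Z$ is certainly not a direct factor of $0$. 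Your detour through the rational homotopy type $\mathrm E_6^c/\mathrm F_4^c\simeq_{\mathbb Q} S^9\times S^{17}$ would not by itself yield the conclusion, since rationally the fibration $\mathrm F_4^c\to\mathrm E_6^c\to\mathrm E_6^c/\mathrm F_4^c$ \emph{does} split (the exponents of $\mathrm F_4$ form a subset of those of $\mathrm E_6$); torsion information is genuinely required, and $n=8$ is the first place it appears.

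One point to tighten: Lemma \ref{Lhomotopy} asks that $\pi_n$ of the \emph{structure group} not be a direct factor of $\pi_n$ of the \emph{total space}. In your notation that means $\pi_n(H)=\pi_n(\mathrm F_4^c)$ not a direct factor of $\pi_n(G)=\pi_n(\mathrm E_6^c)$ --- not of $\pi_n(G/H)$, as you write at one point. You do arrive at the correct formulation a few lines later, but the intervening discussion of $\pi_n(G/H)$ is a detour that obscures what actually has to be checked.
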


\begin{proof} Let $A$ be the (split) complex Albert algebra, and denote its cubic norm by $N$. 
By Proposition \ref{Psphere}, the affine scheme $\bS_N$ is smooth. In view of this it suffices, by Remark \ref{Rtrivial}, to show that the $\Aut(A)$-torsor $\Isom(N)\to\bS_N$ of Proposition \ref{Psphere} is non-trivial. 
To this end it suffices, by Lemma \ref{Lhomotopy}, to show that for some $n$,
$\pi_n(\Aut(A)(\mathbb C))$ is not a direct factor of $\pi_n(\Isom(N)(\mathbb C))$. (The choice of base point is immaterial by connectedness.) 

Let $C$ be the real division octonion algebra, and set $A_0=H_3(C,\bone)$, with norm $N_0$, noting that $A=A_0\otimes_{\mathbb R}\mathbb C$.
Then $\Aut(A_0)$ (resp.\ $\Isom(N_0)$) is an anisotropic real group of type $\mathrm F_4$ (resp.\ $\mathrm E_6$). 
The Lie group $\Aut(A)(\mathbb C)=\Aut(A_0)(\mathbb C)$ (resp.\ $\Isom(N)(\mathbb C)=\Isom(N_0)(\mathbb C)$) contains $\Aut(A_0)(\mathbb R)$ (resp.\ $\Isom(N_0)(\mathbb R)$) as a maximal compact subgroup.
By Cartan decomposition, $\Aut(A)(\mathbb C)$ is thus homeomorphic to $\Aut(A_0)(\mathbb R)\times\mathbb R^k$ and $\Isom(N)(\mathbb C)$ is homeomorphic to $\Isom(N_0)(\mathbb R)
\times\mathbb R^l$ for suitable $k$ and $l$. By \cite{Mi}, $\pi_8(\Aut(A_0)(\mathbb R))$, and hence also $\pi_8(\Aut(A)(\mathbb C))$, 
is cyclic or order 2, while by \cite{BS}, $\pi_8(\Isom(N_0)(\mathbb R))$, and thus also $\pi_8(\Isom(N^s)(\mathbb C))$, is trivial. This completes the proof.
\end{proof}

\section{Compositions of Quadratic Forms and $\mathrm{D}_4$-torsors}\label{S3}
In this section, we will define and consider Albert algebras arising from compositions of quadratic forms, and describe these in terms of certain $\mathrm{D}_4$-torsors.

\subsection{Composition Algebras and Compositions of Quadratic Forms}
An Albert algebra over $R$ is called \emph{reduced} if it is isomorphic to the algebra $H_3(C,\Gamma)$ of twisted hermitian matrices over an octonion $R$-algebra $C$, with
$\Gamma=(\gamma_1,\gamma_2,\gamma_3)\in (R^*)^3$. Before giving the definition of $H_3(C,\Gamma)$, we note that the formulae can be simplified slightly if they are expressed in terms of the \emph{para-octonion algebra obtained from $C$}. This is the algebra with underlying
quadratic module $C$, and multiplication given by
\[x\cdot y=\bar x\ \bar y,\]
where $a\mapsto \bar a$ denotes the usual involution on $C$ and juxtaposition the multiplication of $C$. This algebra is not unital, but it is 
a symmetric composition algebra in the sense of the following definition.

\begin{Def} A \emph{composition algebra} $(C,q)$ over $R$ is a faithfully projective $R$-module $C$ endowed with a bilinear map $C\times C\to C, (x,y)\mapsto x\cdot y$ (the multiplication) and a 
nonsingular quadratic form $q=q_C$ satisfying $q(x\cdot y)=q(x)q(y)$ for all $x,y\in C$. The algebra is called \emph{symmetric} if the symmetric bilinear form
\[(x,y)\mapsto \langle x,y\rangle=q(x+y)-q(x)-q(y)\]
satisfies 
\[\langle x\cdot y, z\rangle=\langle x,y\cdot z\rangle\]
for all $x,y,z\in C$. 
\end{Def}

We shall only be interested in symmetric composition algebras of \emph{constant rank 8}. Examples of these include para-octonion algebras and (over fields) Okubo algebras. In this case, the module being faithfully projective amounts to it being projective (of constant rank 8), and
non-singularity of $q$ is equivalent to non-degeneracy of $\langle,\rangle$. Abusing notation, we shall refer to a composition algebra $(C,q)$ simply by $C$.

We shall begin by writing the definition of reduced Albert algebras in terms of symmetric composition algebras. Let thus $C$ be a symmetric composition algebra over $R$ of 
constant rank 8. We will often consider the $R$-module $R^3\times C^3$. Following the notation of \cite{P}, we write an 
element $(\alpha_1,\alpha_2,\alpha_3,u_1,u_2,u_3)\in R^3\times C^3$ as $\sum \alpha_i e_i +\sum u_i[jl]$ where the second sum runs over all cyclic permutations $(i,j,l)$ of
$(1,2,3)$. We write $\Delta$ for the trilinear form on $C$ defined by
\[\Delta(u_1,u_2,u_3)=\langle u_3\cdot u_2,u_1\rangle,\]
noting that by symmetry of $C$, it is invariant under cyclic permutations.

\begin{Prp}\label{PComp} Let $C$ be a symmetric composition algebra over $R$ of constant rank 8 and let $\Gamma=(\gamma_1,\gamma_2,\gamma_3)\in R^3$ with $\gamma_1\gamma_2\gamma_3\in R^*$. 
Then $R^3\times C^3$ is a cubic norm structure with base point $e=\sum e_i$ and norm $N$ and adjoint map $\sharp$ defined by
\[N(x)=\alpha_1\alpha_2\alpha_3+\gamma_1\gamma_2\gamma_3\Delta(u_1,u_2,u_3)-\sum\gamma
a_j\gamma_l\alpha_i q(u_i)\]
and
\[x^\sharp=\sum(\alpha_j\alpha_l-\gamma_j\gamma_lq(u_i))e_i + \sum (\gamma_iu_l\cdot u_j-\alpha_iu_i)[jl],\]
where $x=\sum \alpha_i e_i +\sum u_i[jl]$.
Moreover, the bilinear trace $T$ of this cubic norm structure satisfies
\[T(x,y)=\sum \alpha_i\beta_i+\sum \gamma_j\gamma_l\langle u_i,v_i\rangle,\]
where $y=\sum \beta_i e_i +\sum v_i[jl]$.
\end{Prp}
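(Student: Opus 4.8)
The plan is to verify directly that the data $(R^3\times C^3, e, N, \sharp)$ satisfies Petersson's axioms for a cubic norm structure, and then to read off the trace from the norm and adjoint. Since everything is local, and since we may (after a faithfully flat base change splitting $C$, or by the local structure theory of symmetric composition algebras) reduce to the case where $C$ is the para-octonion algebra attached to a split octonion algebra, it actually suffices to match our formulae against the classical ones for $H_3(C_0,\Gamma)$ found in \cite[33]{P}. Concretely, if $C_0$ is the octonion algebra with $x\cdot y=\bar x\,\bar y$ on $C=C_0$, then $\bar x\,\bar y = \langle\text{conjugates}\rangle$, and the classical norm and adjoint of $H_3(C_0,\Gamma)$ get rewritten: the cubic term $\gamma_1\gamma_2\gamma_3\,n(u_1u_2u_3)$ becomes $\gamma_1\gamma_2\gamma_3\Delta(u_1,u_2,u_3)$ once one checks $\langle u_3\cdot u_2, u_1\rangle = \langle \bar u_3\bar u_2, u_1\rangle = n(u_1, u_2 u_3)$ (the linearized octonion norm of the product), and similarly the off-diagonal part $\bar u_l\bar u_j$ of the adjoint becomes $u_l\cdot u_j$. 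So the first step is this translation between the para-octonion multiplication $\cdot$ and the octonion multiplication together with its conjugation.

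Having made that reduction, the second step is the actual axiom check, which I would organize as follows. First confirm the base point identity $e^\sharp = e$ and $N(e)=1$ from the displayed formulae: with $x=e$ all $\alpha_i=1$, $u_i=0$, so $N(e)=1$ and $x^\sharp = \sum(1)e_i = e$. Next, the Euler-type relations: that $N$ is a cubic form is visible from the formula (it is a sum of a monomial of degree $3$ in the $\alpha_i$, a term $\gamma_1\gamma_2\gamma_3\Delta$ which is trilinear hence cubic in the $u_i$, and terms $\gamma_j\gamma_l\alpha_i q(u_i)$ which are linear in $\alpha_i$ times quadratic in $u_i$), and that $\sharp$ is quadratic is likewise visible. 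Then one must check the gradient identity $T(x^\sharp, y) = \partial_y N(x)$ (the directional derivative of $N$ at $x$ in direction $y$), the adjoint identity $(x^\sharp)^\sharp = N(x)x$, and $N(x^\sharp)=N(x)^2$; by \cite[Proposition 5.2 ff.]{P} a subset of these plus linearizations suffices. The trace formula then comes out as $T(x,y) = \partial_x\partial_y \log$-type expression, or more concretely by linearizing $N(x^\sharp,\cdot)$, and one reads off $T(x,y)=\sum\alpha_i\beta_i + \sum\gamma_j\gamma_l\langle u_i,v_i\rangle$ — note the symmetry of this expression confirms $T$ is a symmetric bilinear form.

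For the $\sharp$-related identities the key computational inputs are: the composition law $q(x\cdot y)=q(x)q(y)$ and its linearizations (e.g. $\langle x\cdot y, x\cdot z\rangle = q(x)\langle y,z\rangle$); the associativity-type identity $\langle x\cdot y,z\rangle=\langle x,y\cdot z\rangle$ defining symmetry; and the "para-Moufang" identities valid in any symmetric composition algebra, most importantly $x\cdot(y\cdot x)=(x\cdot y)\cdot x=q(x)y$ and the linearized forms thereof, as well as $(x\cdot y)\cdot(y\cdot x) = q(x)q(y)\,\text{(something)}$ — these are exactly what is needed to expand $(x^\sharp)^\sharp$ and recognize $N(x)x$. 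I expect the main obstacle to be the bookkeeping in verifying $(x^\sharp)^\sharp = N(x)x$: after substituting the formula for $x^\sharp$ into itself, the off-diagonal components involve triple products $(\gamma_j u_l\cdot u_i - \alpha_j u_j)\cdot(\gamma_i u_j\cdot u_l - \alpha_i u_i)$-type expressions, and collapsing these to $N(x)u_i$ requires repeated use of the symmetric-composition identities together with the cyclic invariance of $\Delta$; getting the $\gamma$-powers and indices to match cyclically is where errors would creep in. Everything else is routine once the reduction to the split para-octonion case is in place, and in fact for a referee it is legitimate simply to cite the classical computation in \cite[33.9]{P} for $H_3(C_0,\Gamma)$ and observe that our formulae are its transcription under $x\cdot y = \bar x\,\bar y$.
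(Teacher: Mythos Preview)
Your proposal is correct and, in its core, matches the paper's approach: the paper's proof is literally one sentence stating that the axioms of \cite[5.2]{P} and the trace formula are verified directly, which is your second step (the axiom check using the composition law, the symmetry identity $\langle x\cdot y,z\rangle=\langle x,y\cdot z\rangle$, and the flexible-type law $x\cdot(y\cdot x)=q(x)y$).

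One remark on your first step: the reduction to the para-octonion case by faithfully flat base change is not needed here, and is in fact a slight over-reach. Not every symmetric composition algebra of rank 8 becomes para-octonion after a faithfully flat extension---Okubo algebras in characteristic $3$ are genuinely different even over algebraically closed fields---so the reduction as stated would leave a case unaccounted for. Fortunately this does no harm, since your direct verification via the identities valid in \emph{any} symmetric composition algebra handles all cases uniformly, and that is what the paper intends. (The paper does use a reduction-by-base-change argument, but only later, for the more general Proposition on compositions of quadratic forms, where it reduces to the present proposition rather than to the octonion case.) Also, your citations to ``\cite[33]{P}'' appear to conflate the survey \cite{P} with the monograph \cite{PR}; the axioms are in \cite[5.2]{P}.
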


\begin{proof} It is straight-forward to verify that $N$, $\sharp$ and $1$ satisfy the axioms of \cite{P}, and that the bilinear trace defined in \cite{P} is equal to $T$.
 
\end{proof}

We denote the Jordan algebra of the above cubic form by $H(C,\Gamma)$.

\begin{Rk} If $C$ is a para-octonion algebra obtained from an octonion algebra $O$, then $H(C,\Gamma)=H_3(O,\Gamma)$, and the above definition coincides with the definition
of $H_3(O,\Gamma)$ in \cite{P}. 
\end{Rk}

For a para-octonion algebra $C$, we write $\RT(C)$ for the affine group scheme defined by
\[\RT(C)(S)=\{(t_1,t_2,t_3)\in\mathbf{SO}(q_C)(S)^3|t_1(x\cdot y)=t_3(x)\cdot t_2(y)\}.\]
Up to renaming the indices, by \cite{AG}, this is precisely the semi-simple simply connected group of type $\mathrm D_4$ denoted in \cite{AG} by $\RT(O)$, where $O$ is the 
octonion algebra from which $C$ is obtained.

For later use, we recall the definition of a composition of quadratic forms, generalising that of a composition algebra.

\begin{Def} A \emph{composition of quadratic forms over $R$} is a heptuple
\[\calM=(C_1,C_2,C_3,q_1,q_2,q_3,m),\] 
where $C_1$, $C_2$ and $C_3$ are projective 
$R$-modules of the same rank, and each $q_i$ is a non-singular quadratic form on $C_i$, and where 
$m:C_3\times C_2\to C_1$ is a bilinear map satisfying $q_1(m(x,y))=q_3(x)q_2(y)$ for any $x\in C_3$ and $y\in C_2$. We call the rank of $C_i$ the \emph{rank} of $\calM$. If 
$\calM'=(C'_1,C'_2,C'_3,q'_1,q'_2,q'_3,m')$ is another composition, then a \emph{morphism of compositions of quadratic forms} is a triple $(t_1,t_2,t_3)$ of isometries $t_i:C_i\to C'_i$
such that $m'(t_3(x),t_2(y))=t_1m(x,y)$ for each $x\in C_3$ and $y\in C_2$.
\end{Def}

It is clear that this notion is stable under base change. The notions of isomorphisms and automorphisms are clear, and we thus obtain an affine group scheme $\Aut(\calM)$.

\begin{Ex} Let $C$ be a composition algebra over $R$, with quadratic form $q_C$. 
The multiplication map $m_C:C\times C\to C$ makes $\calM_C:=(C,C,C,q_C,q_C,q_C,m_C)$ into a composition of quadratic forms. If $C$ is a para-octonion algebra, then by definition, $\Aut(\calM_C)=\RT(C)$.
\end{Ex}

Given a composition of quadratic forms $\calM=(C_1,C_2,C_3,q_1,q_2,q_3,m)$ of constant rank 8, the regularity of $q_2$ and $q_3$ implies that the map $m$ determines maps $m_2:C_1\times C_3\to C_2$ and $m_3:C_2\times C_1\to C_3$ by
\begin{equation}\label{companions}
 \begin{array}{l}
  \langle m_2(x_1,x_3),x_2\rangle_2=\langle x_1, m(x_3,x_2)\rangle_1,\\ \langle x_3, m_3(x_2,x_1)\rangle_3=\langle m(x_3,x_2),x_1\rangle_1
  \end{array}
\end{equation}
where $\langle,\rangle_i$ is the bilinear form corresponding to the quadratic form $q_i$. The significance of these is due to the following proposition.

\begin{Prp}\label{PExt} Let $\calM=(C_1,C_2,C_3,q_1,q_2,q_3,m)$ be a composition of quadratic forms over $R$ of constant rank 8. 
Then $\calM\otimes_R S\simeq_S\calM_C\otimes_R S$ for some para-octonion $R$-algebra $C$ and some faithfully flat $R$-ring $S$. Under any isomorphism $\calM\otimes_R S\to\calM_C\otimes_R S$, 
the multiplication on $C_S$ restricts to the 
maps $m_i:C_l\times C_j\to C_i$ for every
cyclic permutation $(i,j,l)$ of $(1,2,3)$, where $m_1=m$ and $m_2$ and $m_3$ are as in \eqref{companions}.
\end{Prp}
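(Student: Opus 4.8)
The plan is to prove Proposition~\ref{PExt} in two stages: first establish the local triviality statement (existence of $S$ and $C$), then verify the claim about the companion multiplications.

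\textbf{Stage 1: reduction to the octonion case.} First I would invoke the fact, recalled in the discussion preceding the proposition (and used already for Albert algebras), that over a suitable faithfully flat $R$-ring $S$ every rank-$8$ composition of quadratic forms becomes isomorphic to one arising from an octonion algebra. Concretely, after passing to a cover $S$ (e.g.\ one splitting all three quadratic spaces, so that each $(C_i)_S$ becomes a hyperbolic space of rank $8$ with a fixed split form $q^s$), the data of $\calM_S$ is encoded by a single bilinear map $m\co (C_3)_S\times(C_2)_S\to(C_1)_S$ compatible with split quadratic forms. By the classical theory of compositions of quadratic forms (Hurwitz/Kaplansky-type results, as used in \cite{AG} and \cite{KMRT}), such a map is, after a further faithfully flat extension if necessary, of the form $m(x,y)=t_1^{-1}\bigl(t_3(x)\cdot t_2(y)\bigr)$ for isometries $t_i$ and the multiplication $\cdot$ of a (split) octonion algebra — equivalently, of the para-octonion algebra obtained from it. Absorbing the $t_i$ into the identification, this yields an isomorphism $\calM\otimes_R S\simeq_S\calM_C\otimes_R S$ for a para-octonion algebra $C$ (which one may even take defined over $R$, namely the split para-octonion algebra, at the cost of enlarging $S$). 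The cleanest route here is probably to note that $\Aut(\calM)=\RT(C)$ is smooth affine of type $\mathrm D_4$ (by the identification in Example, via \cite{AG}), so $\calM$ is an fppf form of $\calM_{C^s}$ and hence locally isomorphic to it; I would lean on this rather than redoing the Hurwitz classification by hand.

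\textbf{Stage 2: the companion maps.} Fix an isomorphism $\varphi=(t_1,t_2,t_3)\co\calM\otimes_R S\to\calM_C\otimes_R S$; by definition this means $t_1 m(x_3,x_2)=t_3(x_3)\cdot t_2(x_2)$, i.e.\ $t_1$ intertwines $m_1=m$ with the multiplication $m_C$ of $C_S$ restricted to $C_3\times C_2$. I must show that the \emph{same} triple $\varphi$ also intertwines $m_2$ and $m_3$ (as defined in \eqref{companions}) with $m_C$ restricted to $C_1\times C_3$ and $C_2\times C_1$ respectively. This is a direct computation using the adjointness relations \eqref{companions} on the $\calM$ side together with the symmetry property $\langle x\cdot y,z\rangle=\langle x,y\cdot z\rangle$ of the symmetric composition algebra $C$ (equivalently the associated identity $\langle x\cdot y,z\rangle=\langle y,z\cdot x\rangle$, cyclic in the appropriate sense). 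Explicitly: for $x_1\in(C_1)_S$, $x_3\in(C_3)_S$, and arbitrary $x_2\in(C_2)_S$, one computes
\[
\langle t_2 m_2(x_1,x_3),\,t_2(x_2)\rangle = \langle m_2(x_1,x_3),\,x_2\rangle = \langle x_1,\,m(x_3,x_2)\rangle = \langle t_1(x_1),\,t_1 m(x_3,x_2)\rangle = \langle t_1(x_1),\,t_3(x_3)\cdot t_2(x_2)\rangle,
\]
and then the symmetry of $C$ rewrites the last term as $\langle t_1(x_1)\cdot_{?} t_3(x_3),\,t_2(x_2)\rangle$ for the appropriate companion multiplication of the para-octonion algebra; since $\langle,\rangle$ is nonsingular and $t_2$ an isometry, $t_2 m_2(x_1,x_3)$ equals that product, which is $m_C$ restricted to $C_1\times C_3$. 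The identity for $m_3$ is entirely symmetric. Here one uses that the companion multiplications of a para-octonion algebra are themselves governed by its $\langle,\rangle$ via exactly the relations \eqref{companions}, so that the three restrictions $m_1,m_2,m_3$ of $m_C$ to $C_S\times C_S$ are mutually adjoint in the same way — this is essentially the content of the triality/$\RT(C)$ description.

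\textbf{Main obstacle.} The genuinely substantive point is Stage~1 — guaranteeing that a rank-$8$ composition of quadratic forms is fppf-locally of the shape $\calM_C$. I would handle this by the ``form of a split object'' argument: exhibit $\calM_{C^s}$ (with $C^s$ the split para-octonion algebra over $R$) as a distinguished object, show $\Aut(\calM_{C^s})=\RT(C^s)$ is a smooth affine group scheme, and prove that any rank-$8$ $\calM$ becomes isomorphic to $\calM_{C^s}$ after some faithfully flat base change by reducing to the case where all quadratic forms are split (possible fppf-locally, since nonsingular quadratic forms of a fixed rank are fppf-locally isomorphic) and then citing the classification of compositions of split quadratic forms over a ring with enough units — the relevant statement is already in the literature (\cite{AG}, \cite{KMRT}) and in fact underlies the sentence ``every composition of rank $8$ is locally isomorphic to a composition arising from an octonion algebra'' in the introduction. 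Once Stage~1 is in place, Stage~2 is a routine though slightly bookkeeping-heavy verification with the bilinear forms, and I would present only the computation for $m_2$, remarking that $m_3$ follows symmetrically.
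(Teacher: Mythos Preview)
Your Stage~2 is correct and matches the paper's argument: both verify the claim for $m_2$ and $m_3$ by pairing against an arbitrary test vector, using that the $t_i$ are isometries together with the symmetric-composition identity $\langle x\cdot y,z\rangle=\langle x,y\cdot z\rangle$ to shuffle the factors, and then invoking nondegeneracy.

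Stage~1, however, has a genuine gap. The route you say you would ``lean on'' --- observing that $\Aut(\calM_{C^s})=\RT(C^s)$ is smooth of type $\mathrm D_4$ and concluding that $\calM$ is an fppf form of $\calM_{C^s}$ --- is circular. Smoothness of the automorphism scheme of the split object tells you how its fppf forms are classified, but it does \emph{not} tell you that an arbitrary rank-$8$ composition $\calM$ is such a form in the first place; that is exactly what the proposition asserts. (Writing ``$\Aut(\calM)=\RT(C)$'' for general $\calM$ already presupposes local triviality.) Your fallback --- split the three quadratic forms and invoke ``Hurwitz/Kaplansky'' from \cite{AG} or \cite{KMRT} --- points in the right direction but is not a proof: those references do not supply the needed statement over general rings.

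The paper simply carries out the Hurwitz step directly, and it is short. Pass to a faithfully flat $S$ over which $q_2$ and $q_3$ represent $1$; choose $a\in(C_3)_S$ and $b\in(C_2)_S$ with $q_{3S}(a)=q_{2S}(b)=1$. Then $f:=m_S(a,-):(C_2)_S\to(C_1)_S$ and $g:=m_S(-,b):(C_3)_S\to(C_1)_S$ are isometries by multiplicativity, and the transported product $x\cdot y:=m_S(g^{-1}x,f^{-1}y)$ makes $(C_1)_S$ into a unital composition algebra with unit $m_S(a,b)$, hence an octonion algebra. Composing with the canonical involution yields the para-octonion algebra $C$ and the isomorphism $(\Id,\kappa f,\kappa g)\colon\calM\otimes S\to\calM_C$; a short remark then upgrades $C$ to an $R$-algebra by further splitting. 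This is precisely the ``Hurwitz classification by hand'' you wanted to avoid, but it is a few lines and self-contained, whereas your torsor shortcut assumes the conclusion.
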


To simplify notation we shall write $\calM\otimes_R S$ as $(C_{1S},C_{2S},C_{3S},q_{1S},q_{2S},q_{3S},m_S)$.

\begin{Rk} It suffices to prove that $\calM\otimes_R S\simeq \calM_C$ for some para-octonion algebra $C$ \emph{over $S$}. Indeed, assume this is the case. If $O$
is the octonion
$S$-algebra from which $C$ is obtained, then $O\otimes_S T\simeq \Zor(R)\otimes_R T$ for some faithfully flat $S$-ring $T$, where $\Zor(R)$ is the split (Zorn) octonion $R$-algebra. Hence
$C\otimes_S T\simeq Z\otimes_R T$, where $Z$ is the para-octonion $R$-algebra obtained from $\Zor(R)$, and \emph{a fortiori} 
$\calM_C\otimes_S T\simeq \calM_{Z\otimes S}\otimes_S T\simeq \calM_Z\otimes_R T $. Altogether,
\[\calM\otimes_R T\simeq (\calM\otimes_R S)\otimes_S T\simeq\calM_C\otimes_S T\simeq\calM_Z\otimes_R T,\]
and the claim follows since $T$ is faithfully flat over $R$.
\end{Rk}

\begin{proof}[Proof of Proposition \ref{PExt}] Choose $S$ such that there are $a\in C_{3S}$ and $b\in C_{2S}$ with $q_{3S}(a)=q_{2S}(b)=1$. (This is possible since a regular quadratic form is split by a faithfully
flat extension.) Define
\[\begin{array}{ll}
f:C_{2S}\to C_{1S},& x\mapsto m_S(a,x),\\
g:C_{3S}\to C_{1S},& x\mapsto m_S(x,b).   
  \end{array}
\]
Then $q_{1S}f(x)=q_{2S}(x)$ and $q_{1S}g(x)=q_{3S}(x)$, whence $f$ and $g$ are isometries, hence invertible. Thus
\[(x,y)\mapsto x\cdot y:=m_S(g^{-1}(x),f^{-1}(y))\]
defines a bilinear multiplication on $C_{1S}$. Since $f^{-1}$ and $g^{-1}$ are isometries, we have
\[q_{1S}(x\cdot y)=q_{3S}g^{-1}(x)q_{2S}f^{-1}(y)=q_{1S}(x)q_{1S}(y).\]
Thus $q_{1S}$ is multiplicative with respect to the multiplication $\cdot$. Setting 
$e=m_S(a,b)$ we have
$e=f(b)=g(a)$. Thus $e\cdot x=ff^{-1}(x)=x$, and likewise $x\cdot e=x$, for all $x\in C_{1S}$. Thus the multiplication is unital, whence it makes
$C_{1S}$ into an octonion algebra over $S$ with quadratic form $q_{1S}$, and
\[(\Id,f,g):\calM\otimes S\to \calM_{C_{1S}}\]
is an isomorphism of compositions of quadratic forms since $g(x)\cdot f(y)=m_S(x,y)$ by construction. Denoting by $\kappa$ the involution on the octonion algebra
$C_{1S}$ and by $C$ the corresponding
para-octonion algebra, we have an isomorphism
\[(\Id,\kappa f,\kappa g): \calM\otimes S\to\calM_C,\]
and the first statement is proved.

To prove the second statement for $i=1$, let $x\in C_3$ and $y\in C_2$. Since any isomorphism $\calM\otimes_R S\to\calM_C\otimes_R S$ takes $m_S$ to the multiplication of $C_S$, it
suffices to show that $z:=m_S(x\otimes 1,y\otimes 1)\in C_{1S}$ descends to $C_1$, which is equivalent to
$z\otimes 1$ being fixed by the standard descend datum 
\[\begin{array}{ll}\theta:C_1\otimes S\otimes S\to C_1\otimes S\otimes S,& c\otimes \mu\otimes\nu\mapsto c\otimes \nu\otimes\mu\end{array}.\]
This in turn follows from the fact that
\[m_S(x\otimes 1,y\otimes 1)=m(x,y)\otimes 1\otimes 1.\]
To prove the statement for $i=3$, we let instead $x\in C_2$ and $y\in C_1$. We must show that for any symmetric composition algebra $C$ with multiplication $\bullet$ over $S$ and any
isomorphism of compositions of quadratic forms
\[(\phi_1,\phi_2,\phi_3): \calM\otimes S\to\calM_C\]
we have
\[\phi_3^{-1}(\phi_2(x\otimes 1)\bullet\phi_1(y\otimes 1))=m_3(x,y)\otimes 1.\]
Since $q_{3S}$ is non-degenerate, it suffices for this to show that for any $z\in C_3$,
\[\langle\phi_3^{-1}(\phi_2(x\otimes 1)\bullet\phi_1(y\otimes 1)),z\otimes 1\rangle_{q_{3S}}=\langle m_3(x,y)\otimes 1,z\otimes 1\rangle_{q_{3S}}.\]
Since $\phi_3:C_{3S}\to C$ is an isometry, the left hand side equals
\[\langle\phi_2(x\otimes 1)\bullet\phi_1(y\otimes 1),\phi_3(z\otimes 1)\rangle_{q_C}\]
which, by symmetry, is equal to
\[\langle\phi_3(z\otimes 1)\bullet\phi_2(x\otimes 1),\phi_1(y\otimes 1)\rangle_{q_C}=\langle \phi_1m_S(z\otimes 1,x\otimes 1),\phi_1(y\otimes 1)\rangle_{q_C}.\]
Since $\phi_1$ is an isometry, this is equal to
\[\langle m_S(z\otimes 1,x\otimes 1),y\otimes 1\rangle_{q_{1S}}=\langle m(z,x),y\rangle_1\otimes 1,\]
which by definition of $m_3$ is equal to
\[\langle m_3(x,y),z\rangle_3\otimes 1=\langle m_3(x,y)\otimes 1,z\otimes 1\rangle_{q_{3S}}\]
as desired. The statement for $i=2$ is deduced analogously, and the proof is complete.
\end{proof}

\subsection{Albert Algebras Constructed from Compositions}\label{Storsor}

Given a composition of quadratic forms
\[\calM=(C_1,C_2,C_3,q_1,q_2,q_3,m)\]
of constant rank 8, consider the $R$-module $R^3\times C_1\times C_2\times C_3$, whose elements we will write, in analogy to the above, as $x=\sum \alpha_i e_i +\sum u_i[jl]$, where $\alpha_i\in R$
and $u_i\in C_i$, and where the second sum runs over all cyclic permutations $(i,j,l)$ of $(1,2,3)$.

\begin{Prp}\label{PCOQF} The module $R^3\times C_1\times C_2\times C_3$ is a cubic norm structure with base point $e=\sum e_i$ and norm $N$ and adjoint map $\sharp$ defined by
\[N(x)=\alpha_1\alpha_2\alpha_3+\gamma_1\gamma_2\gamma_3\langle m(u_3,u_2),u_1\rangle_1-\sum\gamma a_j\gamma_l\alpha_i q_i(u_i)\]
and
\[x^\sharp=\sum(\alpha_j\alpha_l-\gamma_j\gamma_lq_i(u_i))e_i + \sum (\gamma_i m_i(u_l,u_j)-\alpha_iu_i)[jl].\]
Moreover, the bilinear trace $T$ of this cubic norm structure satisfies
\[T(x,y)=\sum \alpha_i\beta_i+\sum \gamma_j\gamma_l\langle u_i,v_i\rangle_i,\]
where $y=\sum \beta_i e_i +\sum v_i[jl]$. The cubic Jordan algebra of this cubic norm structure is an Albert algebra.
\end{Prp}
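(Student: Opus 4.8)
The first two assertions are established by the same direct computation as Proposition \ref{PComp}, namely checking the axioms of \cite{P} for $N$, $\sharp$ and $e$ and then reading off the bilinear trace; the plan for the last assertion, that the associated cubic Jordan algebra $J$ is an Albert algebra, is to verify the two defining conditions: that the underlying module of $J$ is projective of constant rank $27$, and that $J\otimes_R k$ is simple for every $R$-field $k$.

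The rank condition is immediate: the underlying module $R^3\times C_1\times C_2\times C_3$ is the direct sum of the free module $R^3$ and three projective modules of constant rank $8$, hence projective of constant rank $27$. This, together with the norm and adjoint, is visibly stable under base change.

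For simplicity of $J\otimes_R k$, I would fix an $R$-field $k$ and set $\calM_k=\calM\otimes_R k$; since the norm and adjoint commute with base change, $J\otimes_R k$ is the cubic Jordan algebra attached by the present proposition to $\calM_k$. By Proposition \ref{PExt} applied over $k$ --- or, more elementarily, since a nonsingular quadratic form over a field is split by a finite field extension --- there is a field extension $K/k$ and a para-octonion $K$-algebra $C$ with $\calM_k\otimes_k K\simeq\calM_C$. Next I would check, as in Proposition \ref{PComp}, that for a symmetric composition algebra the companion maps of \eqref{companions} satisfy $m_i(u_l,u_j)=u_l\cdot u_j$ and that $\langle m(u_3,u_2),u_1\rangle_1=\Delta(u_1,u_2,u_3)$, using only the cyclic invariance of $\Delta$; consequently the cubic norm structure attached by the present proposition to $\calM_C$ coincides with that of $H(C,\Gamma)=H_3(O,\Gamma)$, where $O$ is the octonion algebra underlying $C$. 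Hence $(J\otimes_R k)\otimes_k K\simeq H_3(O,\Gamma)$, a reduced Albert algebra over $K$ and in particular a simple Jordan algebra (\cite{P}). Finally simplicity descends: a nonzero proper ideal $I\subsetneq J\otimes_R k$ would, by faithful flatness of $K$ over $k$, yield a nonzero proper ideal $I\otimes_k K$ of $(J\otimes_R k)\otimes_k K$, contradicting its simplicity. Thus $J\otimes_R k$ is simple for every $R$-field $k$, so $J$ is an Albert algebra.

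The argument is almost entirely formal; the only genuine computation is the identification of the norm structure of $\calM_C$ with that of $H_3(O,\Gamma)$ --- equivalently, the description of the companion maps $m_2,m_3$ for a symmetric composition algebra --- and this runs exactly parallel to the verification underlying Proposition \ref{PComp}. I expect no real obstacle here: the substantive input is contained in Propositions \ref{PComp} and \ref{PExt}.
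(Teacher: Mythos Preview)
Your plan is sound, and for the Albert-algebra assertion it is essentially what the paper does as well. The difference lies in how you handle the first two assertions. You propose to verify the cubic norm structure axioms directly, ``by the same direct computation as Proposition~\ref{PComp}''. But that computation, for a symmetric composition algebra, leans on identities such as $x\cdot(y\cdot x)=q(x)y$; for a general composition $\calM$ the adjoint identity $(x^\sharp)^\sharp=N(x)x$ unwinds to relations among $m,m_2,m_3$ such as
\[m\bigl(m_3(u_2,u_1),\,m_2(u_1,u_3)\bigr)+q_1(u_1)\,m(u_3,u_2)=\langle m(u_3,u_2),u_1\rangle_1\,u_1,\]
which are not literally the same check. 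They do follow from the composition law together with the defining relations~\eqref{companions}, so your approach works, but it involves more than you let on.

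The paper sidesteps this by invoking Proposition~\ref{PExt} at the outset: pass to a faithfully flat $R$-ring $S$ over which $\calM\otimes_R S\simeq\calM_C$ for a para-octonion $S$-algebra $C$; extending any such isomorphism by the identity on $S^3$ carries the proposed $N$, $\sharp$ and $T$ to those of Proposition~\ref{PComp}, so the axioms (being polynomial identities) hold by faithfully flat descent, and the same isomorphism identifies $H(\calM,\Gamma)\otimes_R S$ with the reduced Albert algebra $H(C,\Gamma)$, whence the final assertion via \cite[Theorem~17]{P}. In short, the paper uses descent once for everything, while you do the identity-chasing by hand for the norm structure and then apply descent only over residue fields for simplicity. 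Both routes are correct; the paper's is shorter.
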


We denote this cubic Jordan algebra by $H(\calM,\Gamma)$. If $\calM=\calM_C$ for some symmetric composition algebra $C$, then $H(\calM,\Gamma)=H(C,\Gamma)$.

\begin{proof} We need to verify the identities (1) of \cite[5.2]{P} in all scalar extensions of $R$. It suffices to do this upon replacing $R$ by a faithfully flat $R$-ring $S$.
By Proposition \ref{PExt}, we can choose $S$ so that $\calM\otimes S\simeq \calM_C$ for some symmetric composition $S$-algebra $C$. Extending any isomorphism 
$\calM\otimes S\simeq \calM_C$ by the
identity on $S^3$, we obtain a linear bijection $H(\calM,\Gamma)\otimes S=H(\calM\otimes S,\Gamma)\to H(C,\Gamma)$ that transforms the expressions
for the norm, adjoint and trace into those of Proposition \ref{PComp}, from which the first two statements therefore follow. We conclude with \cite[Theorem 17]{P} and Proposition \ref{PComp}. 
\end{proof}

\begin{Rk} A related, but different, construction over fields is the Springer construction, which provides an Albert algebra given a \emph{twisted composition}, see \cite[38.6]{KMRT}. Twisted compositions
over a field $k$ involve a quadratic space over a cubic \'etale algebra over $k$; eight-dimensional (symmetric) composition algebras over $k$ give rise to twisted compositions using the split
\'etale algebra $k\times k\times k$. Note however that the automorphism group of such a twisted composition is the semi-direct product of a simply connected group of type $\mathrm D_4$
with the symmetric group $S_3$ \cite[36.5]{KMRT}, and therefore is not the required notion for our purposes.  Generalising the notion of twisted compositions to the ring setting
is a possible topic for a future investigation.
 
\end{Rk}

The following lemma is known for classical reduced Albert algebras over fields, and generalises, mutatis mutandis, to the case at hand.

\begin{Lma}\label{Lincl} Let $\calM=(C_1,C_2,C_3,q_1,q_2,q_3,m)$ be a composition of quadratic forms of constant rank 8 over $R$, and let $\Gamma\in (R^*)^3$. The map $\iota:\Aut(\calM)\to\Aut(H(\calM,\Gamma))$ defined, for each $R$-ring $S$ and each $(t_1,t_2,t_3)\in\Aut(\calM)(S)$, by
\[\iota(t_1,t_2,t_3)\left(\sum \alpha_i e_{ii} + \sum u_i[jl]\right)=\sum \alpha_i e_{ii} + \sum t_i(u_i)[jl]\]
is a monomorphism of groups. 
\end{Lma}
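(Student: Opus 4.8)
The plan is to check directly, for each $R$-ring $S$ and each $(t_1,t_2,t_3)\in\Aut(\calM)(S)$, that the linear map $\iota(t_1,t_2,t_3)$ is an automorphism of the cubic norm structure underlying $H(\calM,\Gamma)$ over $S$; the monomorphism statement then follows formally. Observe first that $\iota(t_1,t_2,t_3)$ is visibly $S$-linear and bijective, with inverse $\iota(t_1^{-1},t_2^{-1},t_3^{-1})$. Since the cross product is $x\times y=(x+y)^\sharp-x^\sharp-y^\sharp$ and the $U$-operator is recovered from the trace, the adjoint and the cross product by \eqref{U}, it suffices to show that $\iota(t_1,t_2,t_3)$ fixes the base point $e=\sum e_{ii}$ and commutes with $N$ and with $\sharp$; for good measure one can just as well verify invariance of $T$ and of $\times$ directly from the formulas of Proposition \ref{PCOQF}.

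Next I would run these verifications off the explicit formulas. That $e$ is fixed is clear. For $N$, in the expression of Proposition \ref{PCOQF} the terms $\alpha_1\alpha_2\alpha_3$ and $\gamma_j\gamma_l\alpha_i q_i(u_i)$ are unaffected because each $t_i$ is an isometry, while
\[\langle m(t_3(u_3),t_2(u_2)),t_1(u_1)\rangle_1=\langle t_1 m(u_3,u_2),t_1(u_1)\rangle_1=\langle m(u_3,u_2),u_1\rangle_1,\]
using the morphism condition $m(t_3(x),t_2(y))=t_1 m(x,y)$ and again that $t_1$ is an isometry; the same two ingredients give invariance of $T$ and of the scalar part of $\sharp$. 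Comparing $\iota(t_1,t_2,t_3)(x^\sharp)$ with $\big(\iota(t_1,t_2,t_3)(x)\big)^\sharp$ term by term then reduces the whole claim to the identities
\[t_i\big(m_i(u_l,u_j)\big)=m_i\big(t_l(u_l),t_j(u_j)\big)\]
for every cyclic permutation $(i,j,l)$ of $(1,2,3)$, where $m_1=m$ and $m_2,m_3$ are as in \eqref{companions}.

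The step I expect to be the main obstacle is establishing these last identities for $i=2$ and $i=3$: the compatibility of the companion maps $m_2,m_3$ with the triple $(t_1,t_2,t_3)$ is not literally part of the definition of a morphism of compositions and must be extracted from \eqref{companions}. For $i=1$ the identity is exactly the morphism condition. For $i=2$ (with $i=3$ symmetric) I would use that $q_2$ is non-singular, so $\langle,\rangle_2$ is non-degenerate, and that $t_2$ is bijective: it then suffices to compare both sides after pairing with $t_2(v)$ for arbitrary $v\in C_2$, and invoking \eqref{companions} for $m_2$ together with the isometry property of $t_1$ and the morphism condition one finds that both $\langle t_2(m_2(u_1,u_3)),t_2(v)\rangle_2$ and $\langle m_2(t_1(u_1),t_3(u_3)),t_2(v)\rangle_2$ equal $\langle u_1,m(u_3,v)\rangle_1$. (Alternatively one could pass, via Proposition \ref{PExt}, to a faithfully flat extension over which $\calM$ becomes $\calM_C$ and invoke the classical case, but the direct check is cleaner and avoids descent.) Once $\iota(t_1,t_2,t_3)\in\Aut(H(\calM,\Gamma))(S)$ is established, the rest is immediate: the defining formula shows $\iota$ is natural in $S$ and satisfies $\iota(tt')=\iota(t)\iota(t')$, and if $\iota(t_1,t_2,t_3)=\Id$ then evaluating on the elements $u_i[jl]$ forces $t_i=\Id_{C_i}$ for each $i$; hence $\iota$ is a monomorphism of $R$-group schemes.
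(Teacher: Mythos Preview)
Your argument is correct, but you do more than the paper does. The paper invokes \cite[Corollary 18]{P}, which says that a linear bijection of cubic Jordan algebras is an automorphism as soon as it fixes the unity and preserves the norm $N$; this is because in a cubic norm structure the adjoint (and hence the trace, the cross product, and the $U$-operator) is recoverable from $N$ and the base point via directional derivatives. With that shortcut in hand, the paper only needs the norm computation you carry out in your second paragraph, and the proof is finished---there is no need to verify compatibility with $\sharp$ and hence no need for the companion-map identities $t_i m_i(u_l,u_j)=m_i(t_l u_l,t_j u_j)$. Your direct verification of these identities via non-degeneracy of $\langle,\rangle_2$ and $\langle,\rangle_3$ is correct and self-contained (it avoids citing \cite{P}), and the compatibility of $m_2,m_3$ with automorphisms of $\calM$ is a useful observation in its own right; but it is redundant for the lemma as stated.
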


\begin{proof} Set $A=H(\calM,\Gamma)$. We first show that $\iota_S(t_1,t_2,t_3)$ is an automorphism of $A_S$ for each $(t_1,t_2,t_3)\in \bAut(\mathcal M)(S)$, $S$ an $R$-ring. 
As it is linear, by \cite[Corollary 18]{P}, it suffices to show that it fixes the unity and preserves 
the cubic norm. The first condition is satisfied by construction; as for the second, recall that
\[N(x)=\alpha_1\alpha_2\alpha_3+\gamma_1\gamma_2\gamma_3\langle m(u_3,u_2),u_1\rangle_1-\sum\gamma a_j\gamma_l\alpha_i q_i(u_i);\]
thus since $t_i$ is an isometry with respect to $q_i$ for each $i$, the norm is preserved by $\iota_S(t_1,t_2,t_3)$ if and only if
\[\langle m(t_3(u_3),t_2(u_2)),t_1(u_1)\rangle_1=\langle m(u_3,u_2),u_1\rangle_1.\]
By definition of $\Aut(\calM)$, the left hand side is equal to $\langle t_1m(u_3,u_2),t_1(u_1)\rangle_1$, which is equal to the right hand side since $t_1$ is an isometry.
The map $\iota$ is further clearly natural, respects composition and is injective, whence the claim follows.
\end{proof}

To understand the quotient $\bAut(A)/\iota(\Aut(\calM))$, we will extend the approach of \cite{ALM}. 

\begin{Def} Let $A$ an Albert algebra over $R$. A \emph{frame for $A$} is a triple $(c_1,c_2,c_3)\in A^3$ such that
\begin{enumerate}
 \item $U_{c_i}1=c_i$, $U_{c_i}c_j=\delta_{ij}$ and $\sum_i c_i=1$, and
 \item the module $U_{c_i}A$ has rank 1.
\end{enumerate}
\end{Def}

The first condition states that $(c_1,c_2,c_3)$ is a complete set of idempotents, and the space $U_{c}A$, for an idempotent $c\in A$, is the \emph{Peirce 2-space} of $c$, which is known to be
a direct summand of $A$ by Peirce decomposition, and hence is projective. The arguments of \cite[4.5]{ALM} (using \cite[6.12]{CTS}) show that the functor $\bF=\bF_A$ from the category of $R$-rings to that of sets, defined by
$\bF(S)$ being the set of all frames for $A_S$, is an affine $R$-scheme. It should be mentioned that there is an extensive theory of frames in Jordan algebras,
in which we shall not delve, the notion introduced above being the particular case we need for our purposes. The interested reader may consult \cite{PR}.

\begin{Ex}\label{Edist} If $A=H(\calM,\Gamma)$, then the triple $\be=(e_1,e_2,e_3)$ is a frame for $A$, which we will call the \emph{distinguished frame} of $A$.
\end{Ex}

\begin{Prp}\label{stabidem} Let $A=H(\calM,\Gamma)$. Under the natural action of $\bAut(A)$ on $\bF_A$ the stabiliser of $(e_1,e_2,e_3)$ is $\iota(\Aut(\calM))$.  
\end{Prp}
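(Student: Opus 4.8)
The plan is to show each inclusion of the stabiliser and $\iota(\Aut(\calM))$ separately, reducing everything to a faithfully flat base extension where $\calM$ becomes $\calM_C$ for a para-octonion algebra $C$, so that we can invoke the corresponding result of \cite{ALM} for the classical reduced Albert algebra $H(C,\Gamma) = H_3(O,\Gamma)$. The inclusion $\iota(\Aut(\calM)) \subseteq \Stab_{\bAut(A)}(\be)$ is immediate: by Lemma \ref{Lincl} each $\iota_S(t_1,t_2,t_3)$ is an automorphism of $A_S$, and by the explicit formula it fixes each $e_i$, hence fixes the distinguished frame $\be = (e_1,e_2,e_3)$. So the content is the reverse inclusion: any $\phi \in \bAut(A)(S)$ fixing $(e_1,e_2,e_3)$ must be of the form $\iota_S(t_1,t_2,t_3)$ for a (necessarily unique) triple $(t_1,t_2,t_3) \in \Aut(\calM)(S)$.

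First I would fix an $R$-ring $S$ and $\phi \in \bAut(A_S)$ fixing each $e_i$. Passing to a faithfully flat $S$-ring $S'$, Proposition \ref{PExt} gives an isomorphism $\psi \colon \calM_{S'} \to \calM_C$ for a para-octonion $S'$-algebra $C$, which extends (by the identity on $S'^3$, as in the proof of Proposition \ref{PCOQF}) to an isomorphism $\bar\psi \colon H(\calM,\Gamma)_{S'} \to H(C,\Gamma)$ carrying the distinguished frame to the distinguished frame. Then $\bar\psi \phi_{S'} \bar\psi^{-1}$ is an automorphism of the classical reduced Albert algebra $H(C,\Gamma) = H_3(O,\Gamma)$ fixing $(e_1,e_2,e_3)$, so by the corresponding result over rings established in \cite{ALM} (the frame stabiliser statement, which the paper has announced generalises mutatis mutandis), it equals $\iota_{S'}(s_1,s_2,s_3)$ for some $(s_1,s_2,s_3) \in \RT(C)(S') = \Aut(\calM_C)(S')$. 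Transporting back through $\psi$, we get $\phi_{S'} = \iota_{S'}(t'_1,t'_2,t'_3)$ with $(t'_1,t'_2,t'_3) = \psi^{-1}(s_1,s_2,s_3)\psi \in \Aut(\calM)(S')$. It remains to descend this triple from $S'$ to $S$: since $\phi$ is defined over $S$ and $\iota$ is injective and natural, the two pullbacks of $(t'_1,t'_2,t'_3)$ to $S' \otimes_S S'$ have equal images under the injective map $\iota$, hence coincide, so by fppf descent for the affine scheme $\Aut(\calM)$ there is a unique $(t_1,t_2,t_3) \in \Aut(\calM)(S)$ restricting to it, and $\phi = \iota_S(t_1,t_2,t_3)$ by injectivity of $\iota_S$ after base change to $S'$.

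An alternative, more self-contained route avoids citing \cite{ALM} for the reduced case: one analyses the action of $\phi$ on the Peirce decomposition $A = \bigoplus_i Re_i \oplus \bigoplus_{i} C_i[jl]$ determined by the frame. Since $\phi$ fixes each $e_i$ and is an automorphism, it preserves each Peirce summand; in particular $\phi$ restricts to $S$-linear maps $t_i \colon C_{iS} \to C_{iS}$ on the off-diagonal pieces (using that $\phi$ also fixes $1 = \sum e_i$). Because $\phi$ preserves the cubic norm $N$ and its linearisations, evaluating on elements of the form $e_i + u_i[jl]$ and on $u_i[jl] + u_j[li] + u_l[ij]$, and using the formula for $N$ from Proposition \ref{PCOQF}, forces $q_i(t_i(u_i)) = q_i(u_i)$ (so each $t_i$ is an isometry) and $\langle m(t_3 u_3, t_2 u_2), t_1 u_1\rangle_1 = \langle m(u_3,u_2),u_1\rangle_1$ for all $u_i$; non-degeneracy of $q_1$ then upgrades the latter to $m(t_3 u_3, t_2 u_2) = t_1 m(u_3,u_2)$, i.e.\ $(t_1,t_2,t_3) \in \Aut(\calM)(S)$, and $\phi = \iota_S(t_1,t_2,t_3)$.

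The main obstacle is the descent step combined with being careful that the triple $(t_1,t_2,t_3)$ extracted over the faithfully flat extension is genuinely defined over $S$; this is routine given the injectivity and naturality of $\iota$ (from Lemma \ref{Lincl}) and the representability of $\Aut(\calM)$ by an affine scheme, but it must be spelled out. If one instead takes the self-contained route, the main work is the Peirce-decomposition bookkeeping: verifying that an automorphism fixing the frame must act diagonally blockwise and then squeezing the isometry and multiplicativity conditions out of norm-preservation, which is a finite but slightly tedious computation in the coordinates of Proposition \ref{PCOQF}.
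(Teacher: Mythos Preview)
Your first approach shares the paper's opening move---reduce fppf-locally to $\calM_C$---but then diverges at the crucial step. You invoke \cite{ALM} for the stabiliser over the ring $S'$, yet \cite{ALM} establishes this only over (algebraically closed) fields; the extension to rings is exactly what the present proposition is meant to supply, so as written the citation is circular. The paper closes this gap by a different mechanism: rather than working point-wise on $S$-points, it treats the inclusion $\iota(\Aut(\calM))\hookrightarrow\bH$ as a morphism of finitely presented group schemes, observes that the source is smooth (hence flat), and invokes the fibre-wise isomorphism criterion \cite[$_4$17.9.5]{EGAIV} to reduce to geometric fibres, where \cite[4.6]{ALM} applies directly (with an extra remark to pass from $\Gamma=\bone$ to general $\Gamma$). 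Your descent argument for the triple $(t_1,t_2,t_3)$ is fine, but it only becomes relevant once the result over $S'$ is already in hand.

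Your second, self-contained route via the Peirce decomposition is a genuine and correct alternative that the paper does not take. It trades the scheme-theoretic machinery for an explicit coordinate computation and handles arbitrary base rings in one stroke, without the flat/fibre-wise detour. The outline is sound: fixing the frame forces $\phi$ to act as the identity on $\bigoplus_i Re_i$ and to preserve each off-diagonal Peirce component $C_i[jl]$, and then norm-preservation on test elements of the form $\alpha_j e_j+\alpha_l e_l+u_i[jl]$ and $\sum u_i[jl]$ yields $q_i\circ t_i=q_i$ and, via non-degeneracy of $\langle\cdot,\cdot\rangle_1$, the relation $t_1 m(u_3,u_2)=m(t_3 u_3,t_2 u_2)$. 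If you pursue this line, the one point deserving an explicit sentence is that the Peirce decomposition relative to $\be$ really coincides, over any $R$-ring, with the direct-sum decomposition $R^3\times C_1\times C_2\times C_3$ used in Proposition~\ref{PCOQF}; this is immediate from the formulae there but should be stated.
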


\begin{proof} Let $\bH\subseteq\bAut A$ be the stabiliser of $(e_1,e_3,e_3)$. This is a finitely presented group scheme and, by construction, it is clear that 
$\iota(\Aut(\calM))\subseteq \bH$. To check that this inclusion is an equality, it suffices to do so locally with respect to the fppf topology. 
We may thus assume that $\calM=\calM_C$ for a para-octonion algebra $C$. Recall that then, $\Aut(\calM)=\RT(C)$. 
By \cite[4.6]{ALM}, the inclusion $\RT(C)\to\bH$ is an isomorphism on each geometric fibre in the case $\Gamma=\bone$, and in the general case
since over an algebraically closed field, $\Aut(H(C,\Gamma))\simeq\Aut(H(C,\bone))$ via an isomorphism that maps $(e_1,e_2,e_3)$ to itself and commutes with the inclusion of
$\RT(C)$. Since $\RT(C)$ is smooth, hence flat, and of finite presentation, 
we conclude with the fibre-wise isomorphism criterion \cite[$_4$.17.9.5]{EGAIV} and the fact that the property of being an isomorphism over fields
is preserved under descent from an algebraic closure.
\end{proof}

\begin{Prp}\label{Ptorsor} Let $A=H(\mathcal M,\Gamma)$. The map $\Sigma:\Aut(A)\to\bF_A$, defined by $\rho\mapsto (\phi(e_i))_i$ for any $R$-ring $S$ and $\rho\in\Aut(A)(S)$, induces an isomorphism between the fppf quotient
$\bAut(A)/\iota(\Aut(\calM))$ and $\bF_A$. 
\end{Prp}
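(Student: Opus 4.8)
The plan is to show that the map $\Sigma$ exhibits $\bF_A$ as the fppf quotient $\bAut(A)/\iota(\Aut(\calM))$, by verifying the three standard ingredients: that $\Sigma$ is $\bAut(A)$-equivariant with fibre over the distinguished frame equal to $\iota(\Aut(\calM))$, that $\Sigma$ is faithfully flat (equivalently, that the action of $\bAut(A)$ on $\bF_A$ is transitive in the fppf-local sense and $\bF_A$ is smooth, or at least flat), and then invoking the representability/quotient criteria already used in the proof of Proposition \ref{Psphere}, namely \cite[XVI.2.2 and VIB.9.2]{SGA3} together with \cite[III.3.2.1]{DG}.

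First I would record the equivariance: for $\rho,\sigma\in\bAut(A)(S)$ one has $\Sigma(\sigma\rho)=(\sigma\rho(e_i))_i=\sigma\cdot\Sigma(\rho)$ with respect to the natural left action of $\bAut(A)$ on $\bF_A$, so $\Sigma$ is a morphism of $\bAut(A)$-schemes sending the identity to the distinguished frame $\be=(e_1,e_2,e_3)$ of Example \ref{Edist}. By Proposition \ref{stabidem}, the stabiliser of $\be$ is exactly $\iota(\Aut(\calM))$, so the scheme-theoretic fibre $\Sigma^{-1}(\be)$ is a right $\iota(\Aut(\calM))$-torsor; concretely $\Sigma$ factors through a monomorphism $\bAut(A)/\iota(\Aut(\calM))\to\bF_A$, and the content is that it is an isomorphism. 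Since $\iota(\Aut(\calM))$ is smooth (being $\RT(C)$ fppf-locally, hence a form of a smooth group) and $\bAut(A)$ is smooth, the fppf quotient $\bAut(A)/\iota(\Aut(\calM))$ is representable by a smooth affine scheme by \cite[XVI.2.2 and VIB.9.2]{SGA3}, exactly as in Proposition \ref{Psphere}.

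The substantive step is therefore transitivity: I must show that the action of $\bAut(A)$ on $\bF_A$ is transitive in the fppf sense, i.e.\ that any two frames become conjugate after a faithfully flat base change, or equivalently that $\Sigma$ is faithfully flat (surjective suffices once the source is flat and the target smooth, by the fibre-wise criterion \cite[$_4$.17.9.5]{EGAIV}). This can be checked fppf-locally on $R$, so by Proposition \ref{PExt} we may assume $\calM=\calM_C$ for a para-octonion algebra $C$, and then by further localisation we may assume $C$ is split, so $A$ is the split Albert algebra. Here the transitivity of $\bAut(A)$ on frames is precisely the content established (over fields, for each geometric fibre) in \cite[4.5--4.6]{ALM}: any frame of the split Albert algebra over an algebraically closed field is conjugate to the distinguished one. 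Combining this fibre-wise transitivity with the smoothness of $\bF_A$ (noted in the paragraph before Proposition \ref{stabidem}) and of $\bAut(A)$, the fibre-wise isomorphism criterion \cite[$_4$.17.9.5]{EGAIV} upgrades the induced monomorphism $\bAut(A)/\iota(\Aut(\calM))\to\bF_A$ to an isomorphism over the split case, and descent from the faithfully flat extension gives the result over $R$. Finally one appeals to \cite[III.3.2.1]{DG} to identify the quotient map with $\Sigma$.

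The main obstacle I anticipate is the transitivity/surjectivity of $\Sigma$: everything else is formal bookkeeping with the quotient machinery, but establishing that an arbitrary frame of $A_S$ is fppf-locally in the $\bAut(A)$-orbit of the distinguished frame requires reducing to the split case and then importing the orbit description of \cite{ALM}, being careful that the reduction steps (passing from $\calM$ to $\calM_C$, then to split $C$) are compatible with the frame structure and do not move the distinguished frame. A secondary point to handle cleanly is that $\bF_A$ is connected/equidimensional enough for the fibre-wise isomorphism criterion to apply; this follows because fppf-locally $\bF_A\cong\bAut(A)/\RT(C)$ is a homogeneous space under a connected smooth group, hence smooth with geometrically connected fibres, so the criterion \cite[$_4$.17.9.5]{EGAIV} is applicable.
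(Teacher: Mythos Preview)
Your proposal is correct and follows essentially the same route as the paper: establish smoothness of $\Aut(\calM)$ (as an fppf-form of $\RT(C)$), form the smooth quotient via \cite[XVI.2.2, VI$_B$.9.2]{SGA3}, and then apply the fibre-wise isomorphism criterion \cite[$_4$.17.9.5]{EGAIV} to reduce to geometric fibres, where \cite[4.6]{ALM} gives the result for $\Aut(H(C,\bone))/\RT(C)$. One small correction: the paragraph before Proposition~\ref{stabidem} only asserts that $\bF_A$ is affine, not smooth, but this does not matter since the fibre-wise criterion only requires the target to be locally of finite presentation and the source to be flat, both of which hold; smoothness of $\bF_A$ is then a consequence of the isomorphism rather than an input.
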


\begin{proof} With respect to the fppf topology, $\Aut(\calM)$ is locally isomorphic to the smooth group $\RT(C)$ for a para-octonion $R$-algebra $C$. Therefore $\Aut(\calM)$
is smooth, and the quotient is representable by a scheme, which is smooth since so is $\Aut(A)$. By the fibre-wise isomorphism criterion \cite[$_4$17.9.5]{EGAIV} and the fact that the property of being an isomorphism over fields
is preserved under descent from an algebraic closure, it suffices to check that the induced map is
an isomorphism on geometric fibres. We may thus assume that $R$ is an algebraically closed field. But then $\calM\simeq\calM_C$ for some para-octonion algebra $C$, and 
$\Aut(H(\calM,\Gamma))\simeq \Aut(H(C,\Gamma))\simeq\Aut(H(C,\bone))$. It thus suffices to consider the case where the quotient
in question is $\Aut(H(C,\bone))/\RT(C)$, in which we conclude with \cite[4.6]{ALM}.
\end{proof}

Similarly to the previous section, this defines an $\RT(C)$-torsor over $\bF_A$ for the fppf topology, 
and we denote by $\bE^\mathbf{c}$ the fibre of $\mathbf{c}=(c_1,c_2,c_3)\in \bF_A(R)$. The main result of this section is that this torsor is in general non-trivial, even in the split case.

\begin{Thm}\label{TD4} If $C$ is the para-octonion algebra of the (split) complex octonion algebra $O$, then
there exists a smooth $\mathbb C$-ring $S$ and 
a composition $\calM$ of quadratic forms 
over $S$ with $\calM\not\simeq\calM_{C_S}$ and such that $H(\calM,\bone)$ is isomorphic to $H_3(O,\bone)=H(\calM_C,\bone)$.
\end{Thm}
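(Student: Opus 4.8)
The plan is to deduce the statement from the non-triviality, already over $\mathbb C$, of the $\RT(C)$-torsor $\Sigma\colon\Aut(A)\to\bF_A$ of Proposition~\ref{Ptorsor}, where $A=H(\calM_C,\bone)=H_3(O,\bone)$, combined with the cohomological dictionary of Remarks~\ref{Rtrivial} and~\ref{Rquotient}. To prove non-triviality I would argue exactly as in the proof of Theorem~\ref{TF4}: apply Lemma~\ref{Lhomotopy} with $E=\Aut(A)$, $G=\RT(C)$ and $X=\bF_A=E/G$, the required $\mathbb C$-point being the neutral element $e=1$. Since $A$ is the split complex Albert algebra, $\Aut(A)(\mathbb C)$ is the complex group of type $\mathrm F_4$ and $\RT(C)(\mathbb C)$ is $\mathrm{Spin}_8(\mathbb C)$; by Cartan decomposition these are homotopy equivalent to the compact groups of type $\mathrm F_4$ and $\mathrm D_4$ respectively. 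One has $\pi_7(\mathrm{Spin}(8))\cong\mathbb Z^2$ (for instance via the long exact sequence of the fibration $\mathrm{Spin}(7)\to\mathrm{Spin}(8)\to S^7$, or because the exponents of $\mathrm D_4$ are $1,3,3,5$, so that rationally $\mathrm{Spin}(8)$ has the homotopy type of $S^3\times S^7\times S^7\times S^{11}$), whereas $\pi_7$ of the compact group of type $\mathrm F_4$ is finite, its exponents being $1,5,7,11$. A free abelian group of rank $2$ is not a direct summand of a finite group, so $\pi_7(\RT(C)(\mathbb C))$ is not a direct factor of $\pi_7(\Aut(A)(\mathbb C))$, and the torsor $\Sigma$ is non-trivial.

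Next I would translate this into the desired conclusion. By Remark~\ref{Rtrivial}, taking $S$ to be the coordinate ring of the affine scheme $\bF_A$ --- which is smooth over $\mathbb C$ because $\bF_A$ is smooth by Proposition~\ref{Ptorsor} --- the evaluation map $\Sigma_S\colon\Aut(A)(S)\to\bF_A(S)$ is not surjective; as its image is precisely the $\Aut(A)(S)$-orbit of the distinguished frame $\be$, the group $\Aut(A)(S)$ acts non-transitively on $\bF_A(S)$. Applying Remark~\ref{Rquotient} over $S$ with $G=\Aut(A)_S$ and $H=\iota(\RT(C))_S$, and using Propositions~\ref{stabidem} and~\ref{Ptorsor} to identify $G/H$ with $\bF_{A,S}$, the kernel of $\iota^*\colon H^1(S,\RT(C))\to H^1(S,\Aut(A))$ is in bijection with the set of $\Aut(A)(S)$-orbits on $\bF_A(S)$, the trivial class corresponding to the orbit of $\be$. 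By the previous sentence this kernel is non-trivial, so it contains the class of $\bE^{\mathbf c}$, the fibre of some frame $\mathbf c\in\bF_A(S)$ not in the image of $\Sigma_S$; note that $\bE^{\mathbf c}$ is then a non-trivial $\RT(C)$-torsor. By Proposition~\ref{PExt} and the splitness of $O$ over $\mathbb C$, every composition of quadratic forms of constant rank $8$ over an $S$-ring is fppf-locally isomorphic to $\calM_{C_S}$, so $\RT(C)$-torsors over $S$ classify such compositions, the trivial torsor corresponding to $\calM_{C_S}$; let $\calM$ be the composition corresponding to $\bE^{\mathbf c}$, so that $\calM\not\simeq\calM_{C_S}$.

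It then remains to check that $\iota^*$ sends $[\bE^{\mathbf c}]$ to $[H(\calM,\bone)]$. Since $\RT(C)=\Aut(\calM_C)$ by definition, and the assignment $\calM'\mapsto H(\calM',\bone)$ of Proposition~\ref{PCOQF} is functorial in $\calM'$ (Lemma~\ref{Lincl}) and compatible with base change, it is compatible with twisting, whence
\[\bE^{\mathbf c}\wedge^{\RT(C)}A=\bE^{\mathbf c}\wedge^{\Aut(\calM_C)}H(\calM_{C_S},\bone)=H\bigl(\bE^{\mathbf c}\wedge^{\Aut(\calM_C)}\calM_{C_S},\bone\bigr)=H(\calM,\bone).\]
Because $[\bE^{\mathbf c}]$ lies in $\ker\iota^*$, the left-hand side is isomorphic as an Albert algebra to $A_S=H_3(O,\bone)_S$, so $H(\calM,\bone)\simeq H_3(O,\bone)_S$, which is the assertion. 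The computational heart of the argument is the homotopy-group comparison of the first paragraph, essentially the same device as in Theorem~\ref{TF4}; the point that most deserves care is the compatibility of the functor $H(-,\bone)$ with twisting by $\RT(C)=\Aut(\calM_C)$-torsors used in the displayed equation, which I would justify via the naturality of the norm, adjoint and trace formulas of Proposition~\ref{PCOQF} under isometries of the underlying composition.
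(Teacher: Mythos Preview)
Your proposal is correct and follows essentially the same approach as the paper: both prove non-triviality of the $\RT(C)$-torsor $\Aut(A)\to\bF_A$ via Lemma~\ref{Lhomotopy} applied to $\pi_7$ (using that $\pi_7(\Spin_8)$ has positive rank while $\pi_7$ of the compact $\mathrm F_4$ is trivial/finite), and then conclude via Remark~\ref{Rtrivial}. Your write-up is in fact more explicit than the paper's about the cohomological translation in your second and third paragraphs (the paper compresses this into a one-line appeal to Remark~\ref{Rtrivial}), but the underlying argument is the same.
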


\begin{proof} We have $O=O_0\otimes_{\mathbb R}\mathbb C$, where $O_0$ is the real division octonion algebra. The quadratic form of $O_0$ 
is the Euclidean form $x\mapsto x_1^2+\cdots + x_8^2$. Let $C_0$ be the corresponding real para-octonion algebra, so that $C=C_0\otimes_{\mathbb R}\mathbb C$. It is known (see e.g.\ \cite{AG}) that
$\RT(C_0)\simeq \Spin_{8,\mathbb R}$, which is anisotropic, and from \cite{J1} we know that $\Aut(H_3(O_0,\bone))$ is anisotropic of type $\mathrm F_4$. 
From \cite{Mi} we know that 
$\pi_n(\Spin_8(\mathbb R))$ contains $\pi_n(\Spin_7(\mathbb R))$ as a direct factor, that $\pi_7(\Spin_7(\mathbb R))\simeq \mathbb Z$ and that the seventh homotopy group of the 
compact group of type $\mathrm F_4$ is trivial, and thus does not contain a direct factor isomorphic to $\pi_7(\Spin_{8,\mathbb R}(\mathbb R))$. Now the complex Lie group
$\RT(C)(\mathbb C)\simeq\Spin_{8,\mathbb R}(\mathbb C)$ contains $\Spin_{8,\mathbb R}(\mathbb R)$ as a maximal compact subgroup, and $\Aut(H_3(O,\bone))(\mathbb C)$ contains 
$\Aut(H_3(O_0,\bone))(\mathbb R)$
as a maximal compact subgroup. As in the proof of Theorem \ref{TF4}, it follows by Cartan decomposition that the homotopy groups of these complex Lie groups coincide with those of their
maximal compact subgroups. Since $\mathbf{F}_A$ is affine, we conclude with Lemma \ref{Lhomotopy} and Remark
\ref{Rtrivial}, since the quotient of a smooth group by a
smooth group is smooth.
\end{proof}

\subsection{Compositions of Quadratic Forms and Circle Products}
The above theorem states that $H(\calM,\bone)\simeq H(\calM',\bone)$ may occur when $\calM\not\simeq \calM'$, and Proposition \ref{Ptorsor} implies that those $\calM'$ with 
$H(\calM',\bone)\simeq H(\calM,\bone)$ are obtained as the twists $\bE^\bc\wedge \calM$ as $\bc$ runs through the frames of $H(\calM,\bone)$. In this section, we set out to
describe these twists explicitly. To this end, we shall use the circle product of Jordan algebras, defined by $x\circ y=\{x1y\}$ in terms of the triple product.

\begin{Rk} If $2\in R^*$, then $J$ carries the structure of a linear Jordan algebra with bilinear product $(x,y)\mapsto xy$ defined by $2xy=x\circ y$.
\end{Rk}

Let $\bc=(c_1,c_2,c_3)\in \bF_A(R)$, and for any permutation $(i,j,l)$ of $(1,2,3)$, set
\[C_{l}^\bc=A_1(c_i)\cap A_1(c_j),\] 
where the Peirce 1-space of an idempotent $c$ is defined as
\[A_1(c)=\{x\in A| c\circ x=x\}.\]

\begin{Lma} The restriction of the quadratic trace $S:x\mapsto T(x^\sharp,1)$ to $C_i^\bc$ is a non-degenerate quadratic form of constant rank 8.
\end{Lma}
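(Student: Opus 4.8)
The plan is to reduce, by faithfully flat descent, to the distinguished frame of $A=H(\calM,\Gamma)$, and there evaluate $S$ directly from the formulas of Proposition~\ref{PCOQF}. First I would record that for any frame $\bc=(c_1,c_2,c_3)$, the simultaneous Peirce decomposition relative to the complete orthogonal system of idempotents $(c_1,c_2,c_3)$ exhibits each $C_l^\bc=A_1(c_i)\cap A_1(c_j)$ (for $(i,j,l)$ a cyclic permutation of $(1,2,3)$) as a direct summand of $A$, hence as a finitely generated projective module; consequently both its rank and the non-degeneracy of a quadratic form carried by it are fpqc-local, i.e.\ may be tested after an arbitrary faithfully flat base change. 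By Proposition~\ref{Ptorsor} the morphism $\Sigma\colon\Aut(A)\to\bF_A$ is an $\iota(\Aut(\calM))$-torsor, hence fppf-locally surjective; after replacing $R$ by a suitable faithfully flat $R$-ring I may therefore assume $\bc=\Sigma(\rho)$ with $\rho\in\Aut(A)(R)$, i.e.\ $c_k=\rho(e_k)$ for $k=1,2,3$.

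Since $\rho$ is an automorphism of the cubic Jordan algebra $A$ it fixes $1$ and preserves $N$, hence also the adjoint, the bilinear trace $T$, and therefore the quadratic trace $S$; moreover $\rho$ carries $A_1(e_k)$ onto $A_1(c_k)$, so it restricts to an isometry $(C_l^{\be},S|_{C_l^{\be}})\to(C_l^\bc,S|_{C_l^\bc})$. It thus suffices to treat the distinguished frame $\be$. For that case I would next identify the Peirce components: exactly as for classical reduced Albert algebras (cf.\ \cite{ALM}, \cite{P}), the circle product on $H(\calM,\Gamma)$ gives $A_2(e_i)=Re_i$ and, for each cyclic $(i,j,l)$, $C_l^{\be}=A_1(e_i)\cap A_1(e_j)=\{\,u[ij]\mid u\in C_l\,\}$, a direct summand of $A$ isomorphic to $C_l$ and hence projective of constant rank $8$. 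This is a routine, if somewhat bookkeeping-heavy, verification from the explicit description of the triple product in Proposition~\ref{PCOQF}.

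It then remains to evaluate $S$ on $C_l^{\be}$. For $x=u[ij]$ with $u\in C_l$ every scalar coordinate $\alpha_k$ of $x$ vanishes, so in the adjoint formula of Proposition~\ref{PCOQF} the only surviving diagonal coordinate of $x^\sharp$ is the $e_l$-one, equal to $-\gamma_i\gamma_j q_l(u)$, while every off-diagonal coordinate of $x^\sharp$ vanishes because each bilinear term $m_a(\,\cdot\,,\,\cdot\,)$ involves two of the three components $u_1,u_2,u_3$, of which at most one is non-zero. Hence $x^\sharp=-\gamma_i\gamma_j q_l(u)\,e_l$, and since $T(e_l,1)=1$ by the trace formula, $S(x)=T(x^\sharp,1)=-\gamma_i\gamma_j q_l(u)$. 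Thus $S|_{C_l^{\be}}=-\gamma_i\gamma_j\,q_l$; as $\gamma_i\gamma_j\in R^*$ and $q_l$ is non-singular of constant rank $8$, so is $S|_{C_l^{\be}}$, proving the lemma. The step I expect to require the most care is the descent reduction via the torsor of Proposition~\ref{Ptorsor} — checking that non-degeneracy and rank are genuinely fpqc-local and that a general frame becomes the distinguished one after faithfully flat base change; once that is in place the remaining computation is elementary.
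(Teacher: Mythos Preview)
Your proof is correct and follows essentially the same approach as the paper: reduce by faithfully flat descent via the torsor of Proposition~\ref{Ptorsor} to the distinguished frame, then identify $C_l^{\be}$ with $C_l$ and compute that the restricted quadratic trace is the invertible scalar multiple $-\gamma_i\gamma_j q_l$. Your version is in fact more explicit than the paper's, which simply asserts the final identification without writing out the adjoint computation.
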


\begin{proof} The quadratic trace being a quadratic form, so is its restriction to $C_i^\bc$. Since non-degeneracy and rank are invariant under faithfully flat descent, 
it suffices to prove the statement
after a faithfully flat extension $S$ of $R$. By Proposition \ref{Ptorsor}, one can choose $S$ such that $c_i=\phi(e_i)$ for some $\phi\in\bAut(A)(S)$. 
Thus $C_i^\bc\otimes S$ is the image
of ${C_{i}}_S$, which is of rank 8, and the quadratic form in question is the transfer of the restriction of the quadratic trace to ${C_{i}}_S$, which by Proposition \ref{PCOQF} is an invertible scalar multiple of 
the non-degenerate form $q_i$. This proves the statement.
\end{proof}

We write $q_i^\bc$ for the \emph{negative of} the restriction of the quadratic trace to $C_i^\bc$. The choice of sign is made for the next proposition to hold, which describes the
essential ingredient in our twist. 

\begin{Rk} For convenience we introduce some notation. Let $\Gamma=(\gamma_1,\gamma_2,\gamma_3)\in (R^*)^3$. If $\calM=(C_1,C_2,C_3,q_1,q_2,q_3,m)$ is a composition of quadratic forms, then so is 
$(C_1,C_2,C_3,\gamma_2\gamma_3 q_1,\gamma_1\gamma_3 q_2,\gamma_1\gamma_2 q_3,\gamma_1 m)$. We shall denote this composition by $\Gamma\calM$.
\end{Rk}

\begin{Prp}\label{Pdeform} The circle product of $A=H(\calM,\Gamma)$ defines a composition of quadratic forms 
$\calM^\bc=(C_1^\bc,C_2^\bc,C_3^\bc,q_1^\bc,q_2^\bc,q_3^\bc,m_\circ^\bc)$ of rank 8, where $m_\circ^\bc$ is the
restriction of $\circ$ to $C_3^\bc\times C_2^\bc$. If $\bc=(e_1,e_2,e_3)$, then $\calM^\bc$ is isomorphic to $\Gamma\calM$.
In that case the automorphism group of $\calM^\bc$ is $\iota(\Aut(\calM))$.
\end{Prp}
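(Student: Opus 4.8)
The plan is to establish the three assertions of Proposition~\ref{Pdeform} in turn, using Proposition~\ref{PExt} to reduce to a para-octonion algebra whenever convenient, since non-degeneracy, rank, and the property of being an isomorphism are all stable under faithfully flat descent. First I would verify that $\calM^\bc$ is a composition of quadratic forms of rank $8$: the preceding lemma already gives that each $q_i^\bc$ is a non-degenerate quadratic form of constant rank $8$ on $C_i^\bc$, so what remains is to check that $m_\circ^\bc$ maps $C_3^\bc\times C_2^\bc$ into $C_1^\bc$ and that $q_1^\bc(x\circ y)=q_3^\bc(x)q_2^\bc(y)$. The containment $C_3^\bc\circ C_2^\bc\subseteq C_1^\bc$ is a Peirce-relations computation: for $x\in A_1(c_1)\cap A_1(c_2)$ and $y\in A_1(c_2)\cap A_1(c_3)$, the standard multiplication rules among Peirce spaces of a complete set of orthogonal idempotents force $x\circ y\in A_1(c_1)\cap A_1(c_3)$. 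The multiplicativity of $q_1^\bc$ I would deduce by faithfully flat descent from the split case together with Proposition~\ref{PExt}: after passing to a faithfully flat $S$ with $c_i=\phi(e_i)$ for $\phi\in\bAut(A)(S)$, everything is transported to the distinguished frame, where the claim becomes the second assertion.

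For the second assertion, $\bc=\be=(e_1,e_2,e_3)$, I would compute directly inside $H(\calM,\Gamma)$. One first identifies $C_i^\be$ explicitly: using the norm and adjoint formulas of Proposition~\ref{PCOQF}, the Peirce $1$-space $A_1(e_i)$ consists of elements of the form $\sum_{k}\alpha_k e_k$-free part supported on the appropriate $C$-slots, and a short calculation shows $C_i^\be$ is precisely $\{u_i[jl] : u_i\in C_i\}$, naturally identified with $C_i$. Next, the quadratic trace restricted to $C_i^\be$: from $T(x,y)=\sum\alpha_i\beta_i+\sum\gamma_j\gamma_l\langle u_i,v_i\rangle_i$ and $S(x)=T(x^\sharp,1)$, combined with the adjoint formula, one reads off that $S$ restricted to the slot $C_i$ is $-\gamma_j\gamma_l q_i$, so that $q_i^\be=\gamma_j\gamma_l q_i$ after the sign correction; this matches the $i$-th quadratic form of $\Gamma\calM$. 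Finally, the circle product $u_3[12]\circ u_2[31]$: expanding $x\circ y=\{x1y\}=(U_{x+y}-U_x-U_y)1$ via \eqref{U} and the adjoint formula, the cross-term produces $\gamma_1 m(u_3,u_2)[23]$, which under the identification $C_i^\be\cong C_i$ is exactly the multiplication $\gamma_1 m$ of $\Gamma\calM$. Assembling these three computations gives an explicit isomorphism $\calM^\be\xrightarrow{\sim}\Gamma\calM$; the main obstacle here is bookkeeping the cyclic index conventions and the placement of the $\gamma$'s, not any conceptual difficulty.

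For the third assertion, note that since $\calM^\be\simeq\Gamma\calM$ and $\Gamma\calM$ differs from $\calM$ only by rescaling the quadratic forms by invertible scalars and the multiplication by $\gamma_1$, any isometry for the forms of $\calM$ is an isometry for those of $\Gamma\calM$, and the multiplicativity condition $m'(t_3 x,t_2 y)=t_1 m'(x,y)$ is scaled by the same $\gamma_1$ on both sides; hence $\Aut(\Gamma\calM)=\Aut(\calM)$ as subgroup schemes of $\bGL(C_1)\times\bGL(C_2)\times\bGL(C_3)$, and likewise $\Aut(\calM^\be)\cong\Aut(\calM)$. It then remains to identify this with $\iota(\Aut(\calM))$ acting on $A=H(\calM,\Gamma)$. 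But by Proposition~\ref{stabidem}, $\iota(\Aut(\calM))$ is precisely the stabiliser of $\be$ in $\bAut(A)$, and every such automorphism preserves each Peirce space $A_1(e_i)$, hence restricts to an automorphism of the composition $\calM^\be$; conversely the explicit formula for $\iota$ shows this restriction map is injective and its image is all of $\Aut(\calM^\be)$ under the identification $C_i^\be\cong C_i$. Thus the restriction map furnishes the claimed isomorphism $\iota(\Aut(\calM))\xrightarrow{\sim}\Aut(\calM^\be)$, completing the proof.
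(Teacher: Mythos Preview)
Your proposal is correct and follows essentially the same approach as the paper's proof: both first handle the distinguished frame $\be$ by a direct computation (identifying $C_i^\be$ with the slot $C_i$, computing $S|_{C_i}=-\gamma_j\gamma_l q_i$, and expanding the circle product to obtain $\gamma_1 m(u_3,u_2)[23]$), then reduce the general frame $\bc$ to this case by faithfully flat descent using Proposition~\ref{Ptorsor} to transport $\bc$ to $\be$ by an automorphism; for the third assertion both rely on the observation that scaling the forms and multiplication by units does not change the automorphism group. The only cosmetic differences are that the paper cites \cite[Theorem 31.12(c)]{PR} for the Peirce containment where you invoke the standard Peirce multiplication rules, and the paper computes $x\circ y$ via the identity $x\circ y=-(x\times y)\times 1$ (valid since $x,y\perp 1$) rather than expanding the triple product directly; your third-assertion argument is more detailed than the paper's one-line justification but makes the same point.
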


From this it follows that in particular, if $\Gamma=\bone$ and $\bc=(e_1,e_2,e_3)$, then $\calM^\bc$ is isomorphic to $\calM$; if in addition $\calM=\calM_C$
for some symmetric composition algebra $C$, then $\calM^\bc$ has automorphism group $\iota(\RT(C))$.

\begin{proof} If $x\in C_3^\bc$ and $y\in C_2^\bc$, then from \cite[Theorem 31.12(c)]{PR} it follows that $x\circ y\in C_1^\bc$. To show the compatibility of
quadratic forms, we must show that $S(x\circ y)=-S(x)S(y)$. We first assume that $\bc=(e_1,e_2,e_3)$. Writing $x=u_3[12]$ and $y=u_2[31]$ we find, from
\eqref{U} and the fact that $x$ and $y$ are orthogonal to 1 with respect to the bilinear trace, that
\[x\circ y=-(x\times y)\times 1.\]
From the formula for the adjoint in Proposition \ref{PCOQF} we have 
$x\times y=\gamma_1m(u_3,u_2)[23]$,
and using the same formula again we find
\[x\circ y=-(x\times y)\times 1=\gamma_1m(u_3,u_2)[23].\]
By an easy computation, one finds that the quadratic trace satisfies $S(u_i[jl])=-\gamma_j\gamma_lq_i(u_i)$, from which it follows that
\[S(x\circ y)=-\gamma_2\gamma_3q_1(\gamma_1 m(u_3,u_2))=-\gamma_1^2\gamma_2\gamma_3q_3(u_3)q_2(u_2)=-S(x)S(y)\]
as desired. This shows the first statement for this particular choice of $\bc$. For the general case, let $\lambda=S(x\circ y)+S(x)S(y)\in R$. We need to show that $\lambda=0$, for which it suffices
to show that $\lambda\otimes 1=0$ in $R\otimes_R R'=R'$, where $R'$ is a faithfully flat $R$-ring. Now for any $z\in A$, writing $S'$, $T'$ and $\sharp'$ respectively for the quadratic trace, linear
trace, and adjoint of $A_{R'}$, we have $(z\otimes 1)^{\sharp'}=z^{\sharp}\otimes 1$ by \cite[33.1]{PR} and thence by linearity of $T$,
\[S'(z\otimes1)=T'((z\otimes 1)^{\sharp'})=T'(z^{\sharp}\otimes 1)=T(z^{\sharp})\otimes 1=S(z)\otimes 1.\] 
This, together with the bilinearity of the circle product, implies that
\[\lambda\otimes 1=S'((x\otimes 1)\circ(y\otimes 1))+S'(x\otimes1)S'(y\otimes1).\]
By Proposition \ref{Ptorsor} we may choose $R'$ such that
$\Aut(A)(R')$ acts transitively on the set of frames of $A_{R'}$. Thus there is $\phi\in\Aut(A)(R')$ with $\phi(c_i\otimes 1)=e_i\otimes 1$, and since $\phi$ preserves $S'$ and respects the circle
product, we have
\[\lambda\otimes 1=S'(\phi(x\otimes 1)\circ\phi(y\otimes 1))+S'(\phi(x\otimes1))S'(\phi(y\otimes1)).\]
Since $\phi(C_i^\bc\otimes R')=C_i\otimes R'$, we are back to the special case treated above, from which we conclude that $\lambda\otimes 1=0$ as desired. This proves the first statement.

The second statement follows since by the above, $\calM^\bc\simeq\Gamma\calM$ for this choice of $\bc$, and the third statement follows since the quadratic forms and the composition of $\calM^\bc$
are scalar multiples of those of $\calM$.
\end{proof}

We can now give an explicit construction of the twist of a composition of quadratic forms by the above torsor.

\begin{Def} Let $\calM$ be a composition of quadratic forms of rank 8, set $A=H(\calM,\Gamma)$ and let $\bc=(c_1,c_2,c_3)$ be a frame in $A$. The \emph{$(\bc,\Gamma)$-deformation}
 of $\calM$
is the composition $\calM^\bc$ of quadratic forms, defined in Proposition \ref{Pdeform}. 
\end{Def}

Note that $\calM^\bc$ is determined uniquely by the datum $(\calM,\bc,\Gamma)$. We will compare this to the twist $\bE^\bc\wedge(\Gamma\calM)$ of $\Gamma\calM$ by the the torsor 
$\bE^\bc$ defined in the end of Section \ref{Storsor}. 

\begin{Rk} If $\bE$ is any $\Aut(\calM)$-torsor over $\Spec(R)$, then the twist $\bE\wedge\calM$ is defined in analogy to twists of modules and algebras. Concretely, it
is the fppf sheaf associated to the presheaf whose $S$-points, for each $R$-ring $S$, is the composition of quadratic forms
constructed as follows. For each $k\in\{1,2,3\}$, define the equivalence relation $\sim_k$ on $\bE(S)\times {C_k}_S$ by $(\phi,x)\sim_k (\phi',x')$ 
precisely when there exists $(t_1,t_2,t_3)\in\Aut(\calM)$ such that 
\[(\phi,t_k(x))=(\phi'\cdot(t_1,t_2,t_3),x')),\]
where the right action $\cdot$ of $\Aut(\calM)$ on $\bE$ is the one defining the torsor. Denote the equivalence
class of $(\phi,x)$ under $\sim_k$ by $[\phi,x]$. For each $k$, the set $(\bE(S)\times {C_k}_S)/\sim_k$ is a quadratic $S$-module under the scalar multiplication
\[\lambda[\phi,x]=[\phi, \lambda x]\]
and addition
\[[\phi,x]+[\psi,y]=[\phi, x+t_k(y)],\]
where $(t_1,t_2,t_3)$ is uniquely defined by $\psi=\phi\cdot(t_1,t_2,t_3)$, and quadratic form $[\phi,x]\mapsto q_i(x)=S(x)$. Moreover we have a map 
\[(\bE(S)\times {C_3}_S)/\sim_3\times (\bE(S)\times {C_2}_S)/\sim_2\ \longrightarrow\ (\bE(S)\times {C_1}_S)/\sim_1\]
defined by
\[([\phi,x],[\psi,y])\mapsto[\phi,m(x,t_2(y))],\]
where $t_2$ is given by $\psi=\phi\cdot(t_1,t_2,t_3)$. These data define the desired composition of quadratic forms. 
Noting that $\Aut(\calM)=\Aut(\Gamma\calM)$, we can do the same after replacing $\calM$ by $\Gamma\calM$.
\end{Rk}

\begin{Thm} Let $\calM$ be a composition of quadratic forms of rank 8 and let $\bc=(c_1,c_2,c_3)$ be a frame in $S=H(\calM,\Gamma)$.
The twist $\bE^\bc\wedge(\Gamma\calM)$ is canonically isomorphic to $\calM^\bc$.
\end{Thm}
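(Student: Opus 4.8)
The plan is to compare the two compositions $\bE^\bc\wedge(\Gamma\calM)$ and $\calM^\bc$ term by term, using the explicit descriptions of both objects that have already been set up, and then to invoke fppf descent to reduce the verification to the case $\bc=(e_1,e_2,e_3)$, where Proposition \ref{Pdeform} already identifies $\calM^\bc$ with $\Gamma\calM$. Concretely, recall from Proposition \ref{Ptorsor} that the fibre $\bE^\bc$ is the $\iota(\Aut(\calM))$-torsor of automorphisms $\phi\in\Aut(A)$ sending the distinguished frame $\be=(e_1,e_2,e_3)$ to $\bc$; here $A=H(\calM,\Gamma)$. Given such a $\phi$ over an $R$-ring $S$, the isometry $\phi$ restricts to isometries $\phi_i\co C_i{}_S\to C_i^\bc\otimes S$ (with respect to the scaled forms $\gamma_j\gamma_l q_i$ on the source and $q_i^\bc$ on the target), and these restrictions are precisely the data needed to compare the twisted composition with the deformed one.

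The key steps, in order: (1) Define the candidate morphism $\Psi\co\bE^\bc\wedge(\Gamma\calM)\to\calM^\bc$ at the presheaf level, by $\Psi_k[\phi,x]=\phi_k(x)$ for each $k\in\{1,2,3\}$, where $\phi_k$ is the restriction of $\phi$ to the $k$-th component. (2) Check that $\Psi_k$ is well-defined on $\sim_k$-equivalence classes: if $\psi=\phi\cdot\iota(t_1,t_2,t_3)$ then $\psi_k=\phi_k\circ t_k$, so $\phi_k(t_k(x))=\psi_k(x)$, which is exactly the relation defining $\sim_k$. (3) Verify that $\Psi$ is additive, $S$-linear, and an isometry for the quadratic forms: $S$-linearity and additivity follow from $\phi_k$ being linear together with the same identity $\psi_k=\phi_k\circ t_k$ used to unwind the twisted addition; the isometry statement holds because $\phi$ preserves the quadratic trace $S$, and by construction $q_i^\bc$ is the negative of the restriction of $S$ while the quadratic form on the twist's $i$-th component is also $[\phi,x]\mapsto S(x)$ — here one must be careful that the form on the $\Gamma\calM$ side is $\gamma_j\gamma_l q_i$, which matches $S$ restricted to $C_i{}_S$ by the computation $S(u_i[jl])=-\gamma_j\gamma_l q_i(u_i)$ recorded in the proof of Proposition \ref{Pdeform}. (4) Verify compatibility with the composition maps: $\Psi_1[\phi,m(x,t_2 y)]=\phi_1 m(x,t_2 y)=\phi_1 m_S(x,t_2 y)$ should equal $m_\circ^\bc(\phi_3 x,\phi_2 y)$, i.e.\ $\phi_3(x)\circ\phi_2(y)$; since $\phi$ is an algebra automorphism it intertwines the circle product, so this reduces to checking that the circle product on the distinguished components $C_3{}_S\times C_2{}_S$ equals $\gamma_1 m(\,\cdot\,,\,\cdot\,)$, which is precisely the computation $x\circ y=\gamma_1 m(u_3,u_2)[23]$ carried out in the proof of Proposition \ref{Pdeform}. (5) Conclude that $\Psi$ is an isomorphism: it is a morphism of presheaves of compositions, each $\Psi_k$ is an isometry hence bijective whenever $\bE^\bc(S)\neq\emptyset$, and since $\bE^\bc$ is an fppf torsor the associated map of fppf sheaves is an isomorphism. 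Finally, canonicity (independence of the choice of $\phi$ used to describe a point) is built into passing to the sheafification, and naturality in $S$ is immediate.

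The main obstacle I expect is bookkeeping the scalar factors $\Gamma=(\gamma_1,\gamma_2,\gamma_3)$ consistently across the three modules and the composition map: the twist must be taken of $\Gamma\calM$ (not $\calM$) precisely so that the forms and the multiplication carry the factors $\gamma_j\gamma_l$ and $\gamma_1$ that appear when one restricts the norm/trace/adjoint of $A=H(\calM,\Gamma)$ to the Peirce components, and getting every index $(i,j,l)$ and every $\gamma$ in the right place — rather than off by a permutation or a stray factor — is where an error would most easily creep in. Everything else is a routine unwinding of the definitions of the contracted product and of $\calM^\bc$, reduced to the already-established special case $\bc=\be$ via transitivity of the $\Aut(A)$-action on frames after a faithfully flat base change (Proposition \ref{Ptorsor}).
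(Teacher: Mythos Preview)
Your proposal is correct and follows essentially the same approach as the paper: define the map $[\phi,x]\mapsto\phi(x)$ on each component, check well-definedness via the $\sim_k$-relations, verify compatibility with the quadratic forms and composition by reducing (via $\phi$) to the distinguished frame where Proposition \ref{Pdeform} identifies $\calM^\be$ with $\Gamma\calM$, and conclude by sheafification. The paper's proof is terser and leaves the $\Gamma$-bookkeeping implicit, but your more detailed unwinding of those scalar factors is exactly what is hidden behind its appeal to Proposition \ref{Pdeform}.
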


\begin{proof} By Proposition \ref{Pdeform}, we may and will identify $\Gamma\calM$ with $\calM^\be$ for the distinguished frame $\be$ from Example \ref{Edist}, 
upon which $\Aut(\calM)$ acts via the inclusion $\iota$ of Lemma \ref{Lincl}.
Consider the map $\Theta:\bE^\bc\wedge(\Gamma\calM)\to\mathcal{M^\bc}$ induced by the maps
\[\begin{array}{ll}
(\bE(S)\times {C_k}_S)/\sim_k\ \to C_k^\bc(S), & [\phi,x]\mapsto \phi(x),
  \end{array}
\]
which is well-defined by definition of $\sim_k$ and the fact that if $\phi\in\Aut(A)(S)$ maps $\be$ to $\bc$, then it maps $C_k^\be$ to $C_k^\bc$. It is straight-forwardly checked that these maps are a well-defined linear
bijections of modules whenever $\bE^\bc(S)\neq\emptyset$. From Proposition \ref{Pdeform} it follows
that they respect the quadratic forms and are compatible with the composition. We conclude with the definition of the sheaf associated to a presheaf.
\end{proof}

\section{Non-Isomorphic and Non-Isometric Coordinate Algebras}\label{S4}
Over fields, a celebrated theorem that goes back, in its essential form, to Albert and Jacobson \cite{AJ} (see \cite[Theorem 21]{P} for a more general statement) implies that for two 
octonion algebras $C$ and $C'$, we have
\[H_3(C,\bone)\simeq H_3(C',\bone)\Longleftrightarrow q_C\simeq q_{C'}\Longleftrightarrow C\simeq C'.\]
By \cite{G1}, the second equivalence does not hold over arbitrary rings.  In this section we consider the first equivalence and show that this also
fails over rings in general. We begin by showing how a combination of known results implies that the first and third conditions are not equivalent over rings. (For visual clarity, we will, in
this section, often use the notation $\barC$ to denote the para-octonion algebra obtained from the octonion algebra $C$.)

\begin{Prp}\label{Pnontriv} There exists a smooth $\mathbb C$-ring $S$ and two octonion $S$-algebras $C$ and $C'$ such that $C\not\simeq C'$ and $H_3(C,\bone)\simeq H_3(C',\bone)$. 
\end{Prp}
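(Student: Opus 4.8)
The plan is to build the two octonion algebras so that they share the same quadratic form but are non-isomorphic, and then to invoke the field-case theorem of Albert--Jacobson to conclude that the associated reduced Albert algebras are isomorphic. Concretely, I would use the torsor
\[\xymatrix@R-10pt{\mathrm F_4\ar[d]^{\mathrm G_2}\\ \mathrm F_4/\mathrm G_2}\]
mentioned in the introduction, i.e.\ the $\Aut(C)$-torsor over $\bAut(H_3(C,\bone))/\iota(\Aut(C))$ obtained from the inclusion $\Aut(C)\hookrightarrow\Spin(q_C)\hookrightarrow\bAut(H_3(C,\bone))$, where $C$ is the split complex octonion algebra. The key input is that this inclusion factors through $\Spin(q_C)$: by \cite{G1}, augmented by \cite{AG}, an octonion algebra is a twist of $C$ by a $\Spin(q_C)$-torsor precisely when it has the same quadratic form as $C$, and Gille showed that such twists need not be isomorphic to $C$ over a ring.

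The steps I would carry out, in order: first, take $C=O=O_0\otimes_{\mathbb R}\mathbb C$ the split complex octonion algebra, with $O_0$ the real division octonion algebra, so $\Aut(O_0)$ is anisotropic of type $\mathrm G_2$ and $\Spin(q_{O_0})\simeq\Spin_{8,\mathbb R}$ is anisotropic. Second, observe that the quotient $\Spin(q_O)/\Aut(O)$ is an affine scheme (the quotient of a smooth group by the smooth group $\Aut(O)$), and that the natural projection $\Spin(q_O)\to\Spin(q_O)/\Aut(O)$ is a non-trivial $\Aut(O)$-torsor: one applies Lemma \ref{Lhomotopy} over $K=\mathbb C$, comparing $\pi_n(\Aut(O)(\mathbb C))$ with $\pi_n(\Spin_8(\mathbb R))$ via Cartan decomposition and the maximal compact subgroups $\Aut(O_0)(\mathbb R)$ and $\Spin_{8,\mathbb R}(\mathbb R)$, using \cite{Mi} to find a degree $n$ (namely $n=6$, where $\pi_6(\Spin_8)$ contains a copy not a direct factor of $\pi_6(G_2)$, or another suitable $n$) where $\pi_n(G_2(\mathbb C))$ is not a direct factor of $\pi_n(\Spin_8(\mathbb C))$. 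Third, by Remark \ref{Rtrivial}, non-triviality of this torsor yields a $\mathbb C$-ring $S$ (which can be taken smooth, being the coordinate ring of the smooth affine scheme $\Spin(q_O)/\Aut(O)$, or a localisation thereof) and an $S$-point of $\Spin(q_O)/\Aut(O)$ not in the image of $\Spin(q_O)(S)$; the corresponding fibre is a non-trivial $\Aut(O_S)$-torsor whose twist $C'$ of $O_S$ satisfies $q_{C'}\simeq q_{O_S}$ yet $C'\not\simeq O_S$. Fourth, conclude: since $C'$ and $O_S$ are octonion algebras with isometric quadratic forms, the theorem of Albert--Jacobson (\cite{AJ}, or \cite[Theorem 21]{P}) gives $H_3(C',\bone)\simeq H_3(O_S,\bone)$; alternatively, one sees directly that $\Aut(O)\to\bAut(H_3(O,\bone))$ factors through $\Spin(q_O)$, so the class of the twisting torsor dies in $H^1(S,\Spin(q_O))$ and hence in $H^1(S,\bAut(H_3(O,\bone)))$.

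The main obstacle I anticipate is the homotopy-group bookkeeping in the third step: one must pin down a specific $n$ for which $\pi_n$ of the compact group of type $\mathrm G_2$ fails to be a direct factor of $\pi_n(\Spin_8(\mathbb R))$, and verify this cleanly from the tables in \cite{Mi} and \cite{BS}; the rest is a recombination of results already established in the excerpt (representability and smoothness of the quotient, the factorisation through $\Spin(q_C)$, and the descent argument). A secondary point requiring care is ensuring the ring $S$ extracted from Remark \ref{Rtrivial} is genuinely smooth over $\mathbb C$ and that the twisted octonion algebra $C'$ is obtained over precisely this $S$ with the claimed properties, but this is handled exactly as in the proofs of Theorems \ref{TF4} and \ref{TD4}.
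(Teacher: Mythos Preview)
Your approach is essentially the same as the paper's: both exploit that the inclusion $\Aut(C)\hookrightarrow\Aut(H_3(C,\bone))$ factors through the simply connected $\mathrm D_4$ group ($\Spin(q_C)$, equivalently $\RT(\barC)$), so that any class in $\Ker\bigl(H^1(S,\Aut(C))\to H^1(S,\RT(\barC))\bigr)$ automatically lies in the kernel of the map to $H^1(S,\Aut(A))$. The paper, however, does not redo the homotopy computation you outline in step two; it simply cites \cite[4.3]{AG} (a variant of \cite[3.5]{G1}) for the existence of a smooth $\mathbb C$-ring $S$ over which $j^*$ has non-trivial kernel, and concludes in one line. Your proposed recomputation is correct in spirit but unnecessary here, and your justification that the quotient is affine (``quotient of a smooth group by a smooth group'') is not sufficient on its own---you would want to invoke \cite[Theorem 4.1]{AG}, which identifies $\RT(\barC)/\Aut(C)$ with the affine scheme $\mathbf S_C^2$.

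One caution: your ``fourth step'' first invokes Albert--Jacobson to pass from $q_{C'}\simeq q_{C}$ to $H_3(C',\bone)\simeq H_3(C,\bone)$, but that theorem is stated over fields and does not apply directly over rings. Your ``alternative'' (the class dies already in $H^1(S,\Spin(q_C))$, hence in $H^1(S,\Aut(A))$) is the argument that actually works, and it is exactly what the paper does.
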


\begin{proof} Set $A=H_3(C,\bone)$. We need to show that the map
\[H^1(S,\Aut(C)){\longrightarrow} H^1(S,\Aut (A)),\]
induced by the subgroup inclusion, has non-trivial kernel for a smooth $\mathbb C$-ring $S$ and octonion $S$-algebra $C$. This inclusion is the composition of the inclusions $i: \RT(\barC)\to \bAut(A)$ of Lemma \ref{Lincl} (noting that
$\RT(\barC)=\bAut(\calM_{\barC})$) and 
$j:\bAut(C)\to\RT(\barC)$ defined by $t\mapsto (t,t,t)$ (see \cite[3.5]{AG}). Thus the map of cohomologies factors as
\[H^1(S,\Aut(C))\overset{j^*}{\longrightarrow}H^1(S,\RT(\barC))\overset{i^*}{\longrightarrow} H^1(S,\bAut A)\]
and has non-trivial kernel since by \cite[4.3]{AG} (which is a variant of \cite[3.5]{G1}), so does $j^*$, for an appropriate (smooth) choice of $S$. This completes the proof.
\end{proof}

From \cite[6.6]{AG} we know that two octonion algebras $C$ and $C'$
have isometric norms if and only if the corresponding compositions $\calM_{\barC}$ and $\calM_{\overline{C'}}$  of quadratic forms are isomorphic. In the previous section we showed that non-isomorphic compositions of quadratic forms
may give rise to isomorphic Albert algebras. To achieve our goal for this section, we need to refine this statement and show that non-isomorphic compositions of quadratic forms 
of the form $\calM_{\barC}$ may give rise to isomorphic Albert algebras. 

Let thus $C$ be an octonion algebra over $R$ and $\barC$ the para-octonion algebra obtained from $C$. Recall that $H_3(C,\bone)=H(\calM_{\barC},\bone)$, and 
consider the inclusions $i: \RT(\barC)=\bAut(\calM_{\barC})\to \bAut(A)$ and 
$j:\bAut(C)\to\RT(\barC)$ from the proof of Proposition \ref{Pnontriv}, and, for every $R$-ring $S$, the induced maps
\[H^1(S,\Aut(C))\overset{j^*}{\longrightarrow}H^1(S,\RT(\barC))\overset{i^*}{\longrightarrow} H^1(S,\bAut A).\]
To show that there exists an octonion $S$-algebra $C'$ with $\calM_{\barC}\not\simeq \calM_{\overline{C'}}$ and 
$A=H_3(C,\bone)\simeq H_3(C',\bone)$,  we need to prove that for some $R$-ring $S$ there is a non-trivial element $\eta\in\Ker(i^*)\cap \mathrm{Im}(j^*)=j^*(\Ker(i^*j^*))$. 

Let $Z=\bAut(A)/\bAut(C)$ and $Y=\bAut(A)/\RT(\barC)$ be the respective fppf quotients with quotient projections $\pi_Z:\Aut(A)\to Z$ and $\pi_Y:\Aut(A)\to Y$.
We will show that for some $z\in Z(R)$, the $\RT(\barC)$-torsor
\[\pi_Z^{-1}(z)\wedge^{\Aut(C)}\RT(\barC)\]
is non-trivial for an appropriate choice of $R$. By Remark \ref{Rquotient}, the class of this torsor is the image under $j^*$ of the class of the torsor $\pi_Z^{-1}(z)$. This implies, together with Remark 
\ref{Rtrivial}, that
for some $R$-ring $S$, this torsor has no $S$-point, and hence there is an octonion $S$-algebra $C'$ with the desired properties. We will therefore prove that the $\RT(\barC)$-torsor
\begin{equation}\label{torsor}
\xymatrix{\Aut(A)\wedge^{\Aut(C)}\RT(\barC)\ar[d]^{\RT(\barC)}\\ Z}
\end{equation}
is non-trivial, i.e.\ does not admit a global section. The following proposition says that this indeed the case in general. We refer the reader to Remark \ref{Rquotient} for the definition of contracted products.

\begin{Thm}\label{TG2} The torsor \eqref{torsor} is non-trivial over $\mathbb R$ for the real division octonion algebra $C$.
\end{Thm}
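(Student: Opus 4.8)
The plan is to mimic the strategy already used to prove Theorems \ref{TF4} and \ref{TD4}: exhibit the torsor over a suitable real group, base-change to $\mathbb C$ to obtain a smooth scheme, and then detect non-triviality via homotopy groups using Lemma \ref{Lhomotopy} together with Remark \ref{Rtrivial}. Concretely, take $C$ to be the real division octonion algebra $O_0$, so that $A=H_3(O_0,\bone)$ has anisotropic automorphism group of type $\mathrm F_4$, and $\bAut(C)=\bAut(O_0)$ is anisotropic of type $\mathrm G_2$ while $\RT(\barC)\simeq\Spin_{8,\mathbb R}$ is anisotropic of type $\mathrm D_4$. The total space of the torsor \eqref{torsor} is the contracted product $E:=\bAut(A)\wedge^{\bAut(C)}\RT(\barC)$, an affine $\mathbb R$-scheme with a canonical $\mathbb R$-point (the image of the neutral element of $\bAut(A)$), fibred over $Z=\bAut(A)/\bAut(C)$.

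First I would record that $E$ is smooth: $\bAut(A)$ is smooth, $\bAut(C)$ is smooth, hence $Z$ is representable by a smooth scheme, and $E\to Z$ is an $\RT(\barC)$-torsor with smooth structure group, so $E$ is smooth as well. Since the statement of the theorem is over $\mathbb R$ and Lemma \ref{Lhomotopy} applies verbatim with $K=\mathbb R$, I do not even need to pass to $\mathbb C$ here — but if one wishes the cleaner ``$\mathbb C$-point/Cartan decomposition'' packaging as in the earlier proofs, one can base-change and use that the real points form a maximal compact subgroup. The core task is then to compute, or at least compare, the relevant homotopy groups: I must show that for some $n$, $\pi_n(\RT(\barC)(\mathbb R))=\pi_n(\Spin_8(\mathbb R))$ is not a direct factor of $\pi_n(E(\mathbb R))$.

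To get a handle on $E(\mathbb R)$, I would use that on real points the contracted product becomes the honest quotient construction: $E(\mathbb R)$ fibres over $Z(\mathbb R)=\bAut(A)(\mathbb R)/\bAut(C)(\mathbb R)$ (using that these anisotropic real groups are compact and connected, so the orbit map on real points is surjective onto real points of the quotient) with fibre $\RT(\barC)(\mathbb R)=\Spin_8(\mathbb R)$, and in fact $E(\mathbb R)\simeq \bAut(A)(\mathbb R)\times^{\bAut(C)(\mathbb R)}\Spin_8(\mathbb R)$. This is a fibre bundle $\Spin_8(\mathbb R)\to E(\mathbb R)\to \mathrm F_4^{\mathrm{cpt}}/\mathrm G_2^{\mathrm{cpt}}$. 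Now $\mathrm F_4^{\mathrm{cpt}}/\mathrm G_2^{\mathrm{cpt}}$ is a well-understood homogeneous space; I would look up its low-degree homotopy (it is $2$-connected, and its first nonvanishing homotopy is in degree $3$), and then run the long exact sequence of the fibration to read off $\pi_n(E(\mathbb R))$ in the range where $\pi_n(\Spin_8(\mathbb R))$ is known and interesting — for instance around $n=7$, where $\pi_7(\Spin_8(\mathbb R))\supseteq\pi_7(\Spin_7(\mathbb R))\simeq\mathbb Z$ while $\pi_7(\mathrm F_4^{\mathrm{cpt}})=0$, exactly the numerology used in the proof of Theorem \ref{TD4}. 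Comparing the resulting group with $\pi_7(\Spin_8(\mathbb R))$ and checking it cannot appear as a direct summand finishes the argument; then Remark \ref{Rtrivial} converts non-triviality into the existence of the desired $\mathbb R$-ring (or, after base change, smooth $\mathbb C$-ring) $S$ and octonion algebra $C'$.

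The main obstacle will be the homotopy-group bookkeeping: unlike in Theorem \ref{TD4}, where the base of the fibration was a point and one only compared $\pi_*$ of $\Spin_8$ with $\pi_*$ of an $\mathrm F_4$-group, here the total space $E(\mathbb R)$ is genuinely a twisted product, so I must control $\pi_*(\mathrm F_4^{\mathrm{cpt}}/\mathrm G_2^{\mathrm{cpt}})$ in the relevant degree and understand the connecting maps in the long exact sequence well enough to conclude that $\pi_n(\Spin_8(\mathbb R))$ is not a direct factor of $\pi_n(E(\mathbb R))$ — in particular I need the fibration's long exact sequence not to conspire to split off the $\Spin_8$ part. Choosing $n$ so that $\pi_{n+1}$ and $\pi_n$ of the base both vanish (so that $\pi_n(E(\mathbb R))\simeq\pi_n(\Spin_8(\mathbb R))$ would force triviality — hence I actually want a degree where the base \emph{does} contribute, killing the $\mathbb Z$ from $\pi_7(\Spin_7)$) is the delicate point; alternatively, one can invoke the earlier-established fact that the composite torsor maps to the non-trivial $\mathrm G_2$-torsor and argue at the level of the induced maps on $H^1$, but the homotopy computation is the more self-contained route and is the step I expect to require the most care.
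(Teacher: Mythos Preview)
Your overall strategy (Lemma \ref{Lhomotopy} applied to the real points of the total space) is the paper's strategy too, but you are missing the one idea that makes the computation go through, and your proposed degree $n=7$ does not work.

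The key step you lack is the observation that the contracted product $G\wedge^K H$ (here $G=\bAut(A)$, $K=\bAut(C)$, $H=\RT(\barC)$) is isomorphic, as an $H$-torsor over $G/K$, to the \emph{honest product} $G\times H/K$, via $(g,h)\mapsto (gh,\overline{h^{-1}})$. This is an elementary lemma, but it collapses your twisted bundle $\bAut(A)(\mathbb R)\times^{\bAut(C)(\mathbb R)}\Spin_8(\mathbb R)$ to the genuine product $\bAut(A)(\mathbb R)\times\bigl(\RT(\barC)(\mathbb R)/\bAut(C)(\mathbb R)\bigr)$. Since $\RT(\barC)/\bAut(C)\simeq\mathbb S_7\times\mathbb S_7$ by \cite[Theorem 4.1]{AG}, the total space has real points homeomorphic to $\mathrm F_4^{\mathrm{cpt}}\times\mathbb S_7\times\mathbb S_7$, and $\pi_n$ of this is simply $\pi_n(\mathrm F_4^{\mathrm{cpt}})\times\pi_n(\mathbb S_7)^2$. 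No long exact sequence is needed.

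With this description one sees immediately that $n=7$ cannot work: $\pi_7(\mathrm F_4^{\mathrm{cpt}})=0$ and $\pi_7(\mathbb S_7)=\mathbb Z$, so $\pi_7(E(\mathbb R))\simeq\mathbb Z^2$, while $\pi_7(\Spin_8(\mathbb R))\simeq\mathbb Z^2$ as well; thus $\pi_7(\Spin_8(\mathbb R))$ is (trivially) a direct factor of $\pi_7(E(\mathbb R))$ and Lemma \ref{Lhomotopy} gives no information. The paper instead takes $n=14$: then $\pi_{14}(\mathbb S_7)\simeq\mathbb Z_{120}$ and $\pi_{14}(\mathrm F_4^{\mathrm{cpt}})\simeq\mathbb Z_2$, so $\pi_{14}(E(\mathbb R))$ has no $7$-torsion, whereas by \cite{Mi} $\pi_{14}(\Spin_8(\mathbb R))$ has $\mathbb Z_7$ as a direct factor. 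This gives the contradiction. Your fibration approach could in principle reach the same conclusion, but you would have to control the connecting maps well enough to detect the $7$-torsion discrepancy at $n=14$; the product lemma bypasses all of that.
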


The proof makes use of the following basic observation.

\begin{Lma} Let $G$ be a group scheme over $R$, $H$ a subgroup scheme of $G$, and $K$ a subgroup scheme of $H$. Then $G\wedge^K H$ and $G\times H/K$ are isomorphic as $H$-torsors over 
$G/K$. Here the right action of $H$ on $G\times H/K$ is given by $(g,x)\cdot h=(gh,h^{-1}x)$ and the structural projection $G\times H/K\to G/K$ is induced by the product map $G\times H\to G, (g,h)\mapsto gh$.
\end{Lma}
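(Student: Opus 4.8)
The claim is the standard fact that for a tower of subgroup schemes $K \subseteq H \subseteq G$, the contracted product $G \wedge^K H$ is the ``mixing'' construction of the trivial $H$-torsor on $G$ over the base $G/K$, and that it is isomorphic to $G \times H/K$ as an $H$-torsor over $G/K$. The approach is to write down an explicit map in each direction on $S$-points (functorially, for every $R$-ring $S$) and check that these are mutually inverse, $H$-equivariant, and compatible with the projections to $G/K$. Throughout one works with the fppf sheaves, so it suffices to define the maps on a presheaf level and observe they are compatible with sheafification, or (cleaner) to define them directly on the sheaves via universal properties.

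\textbf{Key steps.} First I would recall that $G \wedge^K H$ is the quotient sheaf of $G \times H$ by the right $K$-action $(g,h)\cdot k = (gk, k^{-1}h)$, with residual right $H$-action $(g,h)\cdot h' = (g, hh')$ and structural projection to $G/K$ induced by $(g,h) \mapsto g$ (i.e.\ $[g,h] \mapsto gK$). Second, I would define the candidate isomorphism $\Phi \co G \wedge^K H \to G \times H/K$ on $S$-points by $[g,h] \mapsto (gh, h^{-1}K)$. One checks this is well-defined: replacing $(g,h)$ by $(gk,k^{-1}h)$ gives $(gk k^{-1}h, h^{-1}kK) = (gh, h^{-1}K)$ since $kK = K$. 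Third, I would verify $H$-equivariance: $[g,h]\cdot h' = [g,hh'] \mapsto (ghh', h'^{-1}h^{-1}K)$, which matches $(gh, h^{-1}K)\cdot h' = (ghh', h'^{-1}h^{-1}K)$ under the stated action on the target. Fourth, compatibility with projections: $\Phi[g,h]$ maps to $ghh^{-1}K = gK$ under the product-induced projection $G \times H/K \to G/K$, agreeing with the image of $[g,h]$. Fifth, I would exhibit the inverse $\Psi \co (g, xK) \mapsto [g x, x^{-1}]$ (here one picks a local lift $x$ of the $H/K$-component; well-definedness in $x$ modulo $K$ follows since $(gxk, (xk)^{-1}) = (gxk, k^{-1}x^{-1})$, which represents the same class in $G\wedge^K H$), and check $\Phi \Psi = \mathrm{id}$ and $\Psi \Phi = \mathrm{id}$ by direct substitution. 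Since an $H$-equivariant morphism of $H$-torsors over a common base is automatically an isomorphism, one could alternatively shortcut the explicit inverse once $\Phi$ is shown to be $H$-equivariant and base-compatible.

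\textbf{Main obstacle.} The only genuine subtlety is that $G/K$ and $H/K$ are fppf \emph{quotient sheaves}, not naive coset spaces, so an $S$-point of $H/K$ need not lift to an $S$-point of $H$ — it lifts only fppf-locally. Hence the formulas above for $\Psi$ and for the target $H$-action must be interpreted as defining morphisms of fppf sheaves, constructed by descent from an fppf cover over which the relevant lifts exist, and one must check the descent data match. This is routine but is the point where care is needed; everything else is formal manipulation of cosets. I would phrase the argument so that $\Phi$ is defined globally by the honest formula $[g,h]\mapsto (gh, h^{-1}K)$ (no lift needed, since $h$ is an actual $S$-point of $H$ here), note it is a morphism of sheaves over $G/K$ that is $H$-equivariant, and then invoke that any $H$-equivariant morphism of $H$-torsors over $G/K$ is an isomorphism (e.g.\ by \cite[III.1.2.6]{DG}), thereby avoiding the need to write $\Psi$ explicitly at all.
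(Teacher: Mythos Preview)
Your proposal is correct and essentially identical to the paper's proof: the paper defines the same map $\Theta\co G\wedge^K H\to G\times H/K$ by $(g,h)\mapsto (gh,\overline{h^{-1}})$, checks it is well-defined, $H$-equivariant, and lifts the identity of $G/K$, and then concludes by the fact that any morphism of torsors is an isomorphism. The only small addition in the paper is a one-line verification that $G\times H/K$ is indeed an $H$-torsor over $G/K$ (via the section $x\mapsto(\sigma(x),e)$ whenever $G\to G/K$ has a local section $\sigma$), which you implicitly use when invoking the torsor-morphism argument; you may want to make that step explicit.
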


\begin{proof} It is straight-forwardly verified that the $H$-action and structural projection of $G\times H/K$ are well-defined; they make $G\times H/K$ an $H$-torsor over $G/K$ since
if over the $R$-ring $S$, the $K$-torsor $G\to G/K$ admits a section $\sigma$, then the structural projection of $G\times H/K$ admits the section $x\mapsto (\sigma(x),e)$, where 
$e$ is
the image of the neutral element of $H$ in $H/K$. To show that the two $H$-torsors in question are isomorphic over $G/K$, consider the map 
\[\Theta:G\wedge^K H\to G\times H/K.\] 
induced by the map $G\times H\to G\times H/K$ defined by $(g,h)\mapsto (gh,\overline{h^{-1}})$, where $\overline{h^{-1}}$ is the quotient image of $h^{-1}$. This map is invariant with respect
to the quotient defining the contracted product. The fact that the induced map is $H$-invariant and lifts the identity of $G/K$ can be straight-forwardly 
verified on the level of presheaves. We conclude with the universal property the sheaf associated to a presheaf and the fact that any morphism of torsors is an isomorphism.
\end{proof}

\begin{proof}[Proof of Theorem \ref{TG2}] The lemma above implies that the two $\RT(\barC)$-torsors 
\[\Aut(A)\wedge^{\Aut(C)}\RT(\barC)\]
and
\[\Aut(A)\times \RT(\barC)/\Aut(C)\]
over $\Aut(A)/\Aut(C)$, which is representable by an affine scheme by \cite[6.12]{CTS}, are isomorphic. By \cite[Theorem 4.1]{AG}, the quotient $\RT(\barC)/\Aut(C)$ is isomorphic to the product of two copies of the Euclidean
7-sphere $\mathbb{S}_7$. Using Lemma \ref{Lhomotopy} and arguing as in the proof of Theorem \ref{TD4}, it thus suffices to show that for some $n$, 
$\pi_n(\Spin_8(\mathbb R))$ is not a direct
factor in 
\[\Gamma=\pi_n(\Aut(A)(\mathbb R))\times\pi_n(\mathbb{S}_7(\mathbb R))^2.\]
Now $\Aut(A)(\mathbb R)$ is compact of type $\mathrm F_4$. It is known that $\pi_{14}(\mathbb{S}_7)\simeq \mathbb{Z}_{120}$, while from \cite{Mi} we get $\pi_{14}(\Aut(A)(\mathbb R))\simeq \mathbb{Z}_2$. Thus in particular
$\Gamma$ has no direct factor isomorphic to $\mathbb{Z}_7$, which, by \cite{Mi} is a direct factor of $\pi_{14}(\Spin_8(\mathbb R))$. This completes the proof.
\end{proof}

\begin{Cor} There exists a smooth $\mathbb{R}$-ring $S$ and octonion $S$-algebras $C$ and $C'$ such that 
$H_3(C,\bone)\simeq H_3(C',\bone)$ and $q_C\not\simeq q_{C'}$.
\end{Cor}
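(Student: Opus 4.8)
The plan is to read off the statement from Theorem \ref{TG2} by unwinding the torsor-theoretic setup assembled just before it. Let $C$ denote the real division octonion algebra, $A=H_3(C,\bone)=H(\calM_{\barC},\bone)$, and $Z=\Aut(A)/\Aut(C)$ the fppf quotient; recall that $Z$ is representable by an affine $\mathbb R$-scheme by \cite[6.12]{CTS}, and that it is smooth, being the quotient of the smooth group $\Aut(A)$ by the smooth closed subgroup $\Aut(C)$. First I would put $S=\mathbb R[Z]$, so that $S$ is a smooth $\mathbb R$-ring and $Z=\Spec(S)$, and let $z\in Z(S)$ be the generic point, i.e.\ the $S$-point corresponding to $\Id_S$.

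Next I would analyse the fibre of \eqref{torsor} over $z$. Since contracted products commute with base change, this fibre is the $\RT(\barC)$-torsor $\pi_Z^{-1}(z)\wedge^{\Aut(C)}\RT(\barC)$ over $\Spec(S)$. By Theorem \ref{TG2} the torsor \eqref{torsor} is non-trivial over the affine scheme $Z=\Spec(S)$, so Remark \ref{Rtrivial} shows that the generic point $z$ does not lie in the image of its structural projection on $S$-points; hence the fibre over $z$ has no $S$-point and is therefore a non-trivial $\RT(\barC)$-torsor over $\Spec(S)$. Now let $C'$ be the octonion $S$-algebra corresponding to the $\Aut(C)$-torsor $\pi_Z^{-1}(z)$ over $\Spec(S)$, so that $[C']$ is the class of $\pi_Z^{-1}(z)$ in $H^1(S,\Aut(C))$. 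Applying Remark \ref{Rquotient} to the composite inclusion $\Aut(C)\to\Aut(A)$, which is $i\circ j$, the class $[\pi_Z^{-1}(z)]$ lies in the kernel of $(ij)^*=i^*j^*$, since it is the class of the fibre of the quotient projection over the point $z\in Z(S)$; hence the $\Aut(A)$-torsor obtained by pushing $\pi_Z^{-1}(z)$ forward along $ij$ is trivial, which says precisely that $H_3(C',\bone)=H(\calM_{\overline{C'}},\bone)\simeq H((\calM_{\barC})_S,\bone)=H_3(C_S,\bone)$. On the other hand, again by Remark \ref{Rquotient}, $j^*[\pi_Z^{-1}(z)]$ is the class of $\pi_Z^{-1}(z)\wedge^{\Aut(C)}\RT(\barC)$, which we have just shown to be non-trivial; hence $\calM_{\overline{C'}}\not\simeq(\calM_{\barC})_S$, and by \cite[6.6]{AG} this is equivalent to $q_{C'}\not\simeq q_{C_S}$. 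Taking $C_S$ and $C'$ as the two octonion $S$-algebras then yields the corollary.

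The only point requiring care is the bookkeeping of which cohomology set each torsor lives in, and in particular the use of Remark \ref{Rquotient} to see simultaneously that passing to the fibre over a point of $Z$ produces a class killed by $i^*j^*$ while its image under $j^*$ alone survives as the non-trivial fibre torsor. The genuine difficulty --- the homotopy-group computation forcing \eqref{torsor} to be non-trivial --- has already been carried out in Theorem \ref{TG2}, so no further geometric or topological input is needed here.
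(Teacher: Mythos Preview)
Your proof is correct and follows essentially the same approach as the paper's: both deduce from Theorem \ref{TG2} via Remark \ref{Rtrivial} the existence of a smooth $S$ and a point $z\in Z(S)$ whose associated $\Aut(C)$-torsor $\pi_Z^{-1}(z)$ lies in $\Ker(i^*j^*)$ but has non-trivial image under $j^*$, then invoke \cite[6.6]{AG} to translate non-isomorphism of the compositions $\calM$ into non-isometry of the norm forms. Your version is simply more explicit in naming $S=\mathbb R[Z]$ and the generic point, whereas the paper appeals to the discussion preceding Theorem \ref{TG2} and phrases the final step as a proof by contradiction via the kernel comparison form of \cite[6.6]{AG}.
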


\begin{proof} Let $C_0$ be the real division octonion algebra. Since $\Aut(H_3(C_0,\bone))$ and $\Aut(C_0)$ are smooth, so is their quotient, and it follows from
Theorem \ref{TG2} and Remark \ref{Rtrivial} that there exists a smooth $\mathbb R$-ring $S$ and an octonion $S$-algebra $C'$ with $H_3(C)\simeq H_3(C')$ and $\calM_{\barC}\not
\simeq M_{\overline{C'}}$, where $C=C_0\otimes_{\mathbb R} S$. If $q_C\simeq q_{C'}$,
then the class of $C'$ is in the kernel of the map $H^1(S,\Aut(C))\to H^1(S,\mathbf{O}(q_C))$ induced by the inclusion $\Aut(C)\to\mathbf{O}(q_C)$. But by \cite[Theorem 6.6]{AG}, this kernel
coincides with the kernel of 
\[j^*:H^1(R,\Aut(C))\to H^1(R,\RT(\barC)).\] 
Thus the class of $C'$ is in the kernel of $j^*$, whence $\calM_{\barC}\simeq M_{\overline{C'}}$, a
contradiction. This completes the proof.
 
\end{proof}


\begin{thebibliography}{99}
\bibitem[AG]{AG} S.\ Alsaody and P.\ Gille, Isotopes of Octonion Algebras, $\mathbf{G}_2$-Torsors and Triality. Adv.\ Math.\ 343, 864-909 (2019).
\bibitem[AJ]{AJ} A.\ A.\ Albert and N.\ Jacobson, On Reduced Exceptional Simple Jordan Algebras. Ann.\ of Math.\ (2) 66, 400-417 (1957).
\bibitem[ALM]{ALM} P.\ Alberca Bjerregaard, O.\ Loos and C.\ Mart\'in Gonz\'alez, Deriva tions and Automorphisms of Jordan Algebras in Characteristic Two. J.\ Algebra 285, 146-181 (2005).
\bibitem[BS]{BS} R.\ Bott and H.\ Samelson, Applications of the Theory of Morse to Symmetric Spaces. Amer.\ J.\ Math.\ 80, 964-1029 (1958).
\bibitem[CTS]{CTS} J.\ -L.\ Colliot-Th\'el\`ene and J.\ -J.\ Sansuc, Fibr\'es quadratiques et composantes connexes r\'eelles. Math.\ Ann.\ 244, 105-134 (1979).
\bibitem[DG]{DG} M.\ Demazure and P.\ Gabriel. {\it Groupes alg\'ebriques}, Masson (1970).
\bibitem[EGAIV]{EGAIV} A.\ Grothendieck (avec la collaboration de J.\ Dieudonn\'e), \emph{El\'ements de G\'eom\'etrie Alg\'ebrique IV}. Publications 
math\'ematiques de l'I.H.\'E.S. no 20, 24, 28 and 32 (1964 - 1967).
\bibitem[G1]{G1} P.\ Gille, Octonion Algebras over Rings Are Not Determined by their Norms. Canad.\ Math.\ Bull.\ Vol.\ {\bf 57} (2), 303-309, (2014).
\bibitem[G2]{G2} P.\ Gille,   Sur la classification des sch\'emas en groupes semisimples, ``Autour des sch\'emas en groupes, III'', Panoramas et Synth\`eses 47, 39-110 (2015).
\bibitem[J1]{J1} N. Jacobson, Some Groups of Transformations Defined by Jordan Algebras II. J.\ Reine Angew.\ Math.\ 204, 74-98 (1960).
\bibitem[J2]{J2} N. Jacobson, Basic Algebra I. Freeman, San Francisco (1974).
\bibitem[KMRT]{KMRT} M.-A.\ Knus, A.\ Merkurjev, M.\ Rost and J.-P.\ Tignol, \emph{The Book of Involutions}. AMS Colloquium Publications \textbf{44}, American Mathematical Society (1998).
\bibitem[L]{L} O.\ Loos, On Algebraic Groups Defined by Jordan Pairs. Nagoya  Math.\ J.\ {\bf 74}, 23-66 (1979).
\bibitem[M]{M} K.\ McCrimmon, Axioms for Inversion in Jordan Algebras. J.\ Algebra 47, 201-222 (1977).
\bibitem[MZ]{MZ} K.\ McCrimmon and E.\ Zelmanov, The Structure of Strongly Prime Quadratic Jordan Algebras. Adv.\ Math.\ 69, 133-222 (1988).
\bibitem[Mi]{Mi} M. Mimura, The Homotopy Groups of Lie Groups of Low Rank. J.\ Math.\ Kyoto Univ.\ 6, 131-176 (1967).
\bibitem[P]{P} H. \ P. Petersson, A survey on Albert algebras. Transformation groups, in press (2017).
\bibitem[PR]{PR} H. \ P. Petersson and M.\ Racine, Octonions and Albert algebras over commutative rings. Monograph in progress (2018).\footnote{Version of 
30 July 2018, available on H.\ P.\ Petersson's webpage.}
\bibitem[SGA3]{SGA3} {\it S\'eminaire de G\'eom\'etrie alg\'ebrique de l'I.\ H.\ E.\ S., 1963-1964, sch\'emas en groupes, dirig\'e par M.\ Demazure et A.\ 
Grothendieck},  Lecture Notes in Math. 151-153. Springer (1970).
\bibitem[S]{S} T.\ A.\ Springer, \emph{Jordan Algebras and Algebraic Groups}. Ergebnisse der Mathematik un ihre Grenzgebiete, Band 75, Springer-Verlag (1973).
\bibitem[SV]{SV} T.\ A.\ Springer and F.\ D.\ Veldkamp, \emph{Octonion Algebras, Jordan Algebras and Exceptional Groups}. Springer Monographs in Mathematics, Springer-Verlag (2000).
\end{thebibliography}
\end{document}